\documentclass[sn-mathphys-num]{sn-jnl}

\usepackage{amsmath,amssymb,amsfonts,amsthm,mathtools,mathrsfs}
\usepackage[title]{appendix}
\usepackage{booktabs,enumitem,graphicx}
\usepackage{xcolor}
\usepackage{manyfoot}
\makeatletter
\newtheoremstyle{thmstyleone}{.5\baselineskip}{.5\baselineskip}%
{\itshape}{}{\bfseries}{}{ }{\thmname{#1}\thmnumber{ #2}\thmnote{ (#3)}}%
\newtheoremstyle{thmstyletwo}{.5\baselineskip}{.5\baselineskip}%
{}{}{\bfseries}{}{ }{\thmname{#1}\thmnumber{ #2}\thmnote{ (#3)}}%
\newtheoremstyle{thmstylethree}{.5\baselineskip}{.5\baselineskip}%
{}{}{\bfseries}{}{ }{\thmname{#1}\thmnumber{ #2}\thmnote{ (#3)}}%
\@ifundefined{color@artcatboxgray}{\definecolor{artcatboxgray}{cmyk}{0,0,0,0.30}}{}
\makeatother
\theoremstyle{thmstyleone}
\newtheorem{theorem}{Theorem}
\newtheorem{proposition}{Proposition}

\makeatletter
\let\c@proposition\c@theorem
\let\c@lemma\c@theorem
\let\c@corollary\c@theorem
\makeatother
\theoremstyle{thmstyletwo}

\theoremstyle{thmstylethree}
\newtheorem{definition}{Definition}
\theoremstyle{thmstyleone}
\providecommand{\R}{\mathbb{R}}
\providecommand{\C}{\mathbb{C}}

\providecommand{\E}{\mathbb{E}}
\providecommand{\Prob}{\mathbb{P}}
\providecommand{\Var}{\operatorname{Var}}
\providecommand{\Cov}{\operatorname{Cov}}
\providecommand{\KL}{\operatorname{KL}}
\providecommand{\Rpos}{\mathbb{R}_{>0}}
\providecommand{\Leh}{\mathscr{L}}
\providecommand{\dd}{\,\mathrm{d}}
\providecommand{\supp}{\operatorname{supp}}
\providecommand{\eqdef}{\mathrel{\vcentcolon=}}

\providecommand{\LL}{\mathrm{LL}}
\providecommand{\xv}{\mathbf{x}}
\providecommand{\st}{\mathrm{st}}
\providecommand{\lr}{\mathrm{lr}}
\providecommand{\logit}{\operatorname{logit}}

\begin{document}
	
	\title[Lehmer--Lambert Distribution]{Lehmer--Lambert Distribution}
	\author*[1]{\fnm{Masoud} \sur{Ataei}}\email{masoud.ataei@utoronto.ca}
	\author[2]{\fnm{Vladimir V.} \sur{Vinogradov}}\email{vinograd@ohio.edu}
	\affil[1]{\normalsize\orgdiv{Department of Mathematical and
			Computational Sciences},\\ \orgname{University of Toronto},
		\country{Canada}}
	\affil[2]{\normalsize\orgdiv{Department of Mathematics},
		\orgname{Ohio University}, \country{USA}}
	\abstract{The Lehmer transform of a positive signal is the ratio of its power
		sums at consecutive orders, read as a function of the order, and it
		is a strictly increasing analytic curve from the smallest observation
		to the largest. Composing this curve with a map whose inverse is the
		Lambert function turns it into a probability law on the real line,
		the Lehmer--Lambert distribution, which we introduce and study. Its
		distribution, density and quantile functions are closed-form, its
		random variates are exact, and the law is invariant under changes of
		gain and of power-law calibration of the signal. Its modes lie at orders at which the signal changes scale, and for
		a sample its tails are exponential with rates equal to the two
		extreme logarithmic gaps of the sample.  We derive moments, generating functions and limit theorems, and we
		investigate applications of the Lehmer--Lambert distribution in the
		analysis of electroencephalograms of patients with major depressive
		disorder.
	}
	\keywords{Lehmer transform, suddency moment, Lehmer--Lambert
		distribution, Lambert-W function, escort distributions,
		information profile, nonstationary signals, electroencephalography}
	\maketitle
	
	\section{Introduction}\label{sec:intro}
	
	This paper introduces a probability distribution built from a
	positive signal through its Lehmer transform, the ratio of the power
	sums of the signal at two consecutive orders read as a function of
	the order. The transform is a strictly increasing analytic curve
	from the smallest observation to the largest, and any increasing map
	of that range onto the unit interval turns it into a distribution
	function. We use a map whose inverse is the Lambert function, and
	the result is the Lehmer--Lambert distribution, a law on the real
	line whose mass records the orders at which the signal changes
	scale. Its distribution function, quantile function and random
	variates are closed-form, and its density is the product of an escort covariance of the
	signal and an explicit function of the Lehmer mean of the signal. The law is the same for a signal and for any
	rescaled or power-transformed version of it, its modes sit where the
	tilted signal crosses from one scale to the next, and for a sample its tails are exponential with rates equal to
	the two extreme logarithmic gaps of the sample. The law is a representation of a signal rather than a model fitted
	to it.
	
	The Lehmer transform grew out of the Lehmer means~\cite{lehmer1971},
	the ratios of consecutive power sums that contain the harmonic,
	arithmetic and contraharmonic means and belong to the class of Gini
	means~\cite{gini1938}. The reading of the family as one
	function of its order was introduced by Ataei and
	Wang~\cite{ataei2022}, who called the order the suddency moment,
	composed the curve with a map of Lambert type, and used the modes
	and the entropy of the resulting law as features that separated
	patients with major depressive disorder from healthy controls. The
	full theory of the transform is developed by Ataei~\cite{ataei2026},
	who identified the logarithm of the transform as the unit increment
	of the free energy of an exponentially tilted ensemble, the escort
	distribution of statistical mechanics~\cite{beck1993}
	and the Esscher transform of risk theory~\cite{esscher1932}; the
	slope of the transform is then an escort covariance and the slope
	of its logarithm a Jeffreys divergence between neighbouring
	ensembles. Lehmer aggregation has also served as a trainable activation in
	neural networks~\cite{ataei2025}.
	
	The construction belongs to the tradition of generating
	distributions by composing a distribution function with a monotone
	transformation, as in the transformed-transformer families of
	Alzaatreh, Lee and Famoye~\cite{alzaatreh2013} and the Lambert
	families of Goerg~\cite{goerg2011}, in which the Lambert function
	of Corless et al.~\cite{corless1996} is used to skew or heavy-tail a
	base variable. The present law differs from these in what is composed.
	The object is the Lehmer curve of the signal rather than a
	distribution function, and the map is a second Lehmer curve, that of
	a product of an exponential and a lognormal variable, whose inverse
	is the Lambert function. The distribution function is therefore a
	composition of two Lehmer curves, and the parameters of the law are
	the number of exponential--lognormal factors and their lognormal
	variance. Functions of Lambert type also arise directly as
	distribution functions, as the generalized Lambert function of
	Kreinin, Marchenko and Vinogradov~\cite{kreinin2026} shows.
	
	Section~\ref{sec:lehmer} recalls the Lehmer transform and derives
	its inverse. Section~\ref{sec:ll} defines the Lehmer--Lambert
	distribution and derives its basic properties, and
	Section~\ref{sec:properties} its shape, tails, moments, entropy and
	limit theorems. Section~\ref{sec:estimation} treats the choice of the
	parameters and the electroencephalographic application, and
	Section~\ref{sec:conclusion} concludes. Proofs are collected in the
	Appendix.
	
	\newpage
	\section{Lehmer transform}\label{sec:lehmer}
	
	For a positive signal $\xv=(x_1,\dots,x_n)$ the Lehmer
	transform is the ratio of consecutive power sums,
	\begin{equation}\label{eq:defdata}
		\Leh(s)=\frac{\sum_{i=1}^{n}x_i^{\,s}}{\sum_{i=1}^{n}x_i^{\,s-1}},
		\qquad s\in\R .
	\end{equation}
	We read $x^{s}=e^{s\log x}$ as a Boltzmann weight at inverse
	temperature $s$ on the energy $\log x$, so that the transform is the ratio of a partition function at
	consecutive orders, and every
	property below flows from that reading. We state the theory for a
	positive Borel measure $\mu$ on $\Rpos$, which covers the empirical
	measure $\sum_i\delta_{x_i}$ of a signal and an absolutely
	continuous population at once. $X$ denotes the identity variable and
	$\log$ the natural logarithm. The results recalled without proof in
	this section are proved by Ataei~\cite{ataei2026}.
	
	\begin{definition}[Transform and escorts]\label{def:Z}
		The \emph{partition function} and the \emph{free energy} of $\mu$
		are
		\begin{equation}\label{eq:defZ}
			Z_\mu(s)\eqdef\int_{\Rpos}x^{s}\dd\mu(x),
			\qquad
			K_\mu\eqdef\log Z_\mu .
		\end{equation}
		The \emph{Lehmer transform} of $\mu$ and its logarithm are
		\begin{equation}\label{eq:defL}
			\Leh_\mu(s)\eqdef\frac{Z_\mu(s)}{Z_\mu(s-1)},
			\qquad
			\Lambda_\mu\eqdef\log\Leh_\mu .
		\end{equation}
		For every $u$ with $Z_\mu(u)<\infty$ the \emph{escort measure} of
		order $u$ is the probability measure
		\begin{equation}\label{eq:escort}
			\dd\mu_u\eqdef\frac{x^{u}\dd\mu(x)}{Z_\mu(u)} .
		\end{equation}
		For a signal the escort weights are $p_i(u)=x_i^{u}/\sum_jx_j^{u}$.
	\end{definition}
	
	The order $s$ is called the \emph{suddency moment}, a name coined
	by Ataei and Wang~\cite{ataei2022}. Raising $s$ tilts the escort
	toward the largest observations and lowering it tilts the escort
	toward the smallest, so the order controls how sharply the statistic
	responds to sudden large or small excursions of the signal, and the
	whole curve records the signal as a function of that sensitivity.
	The escorts form the exponential family with natural parameter $u$,
	sufficient statistic $\log X$ and log-partition function $K_\mu$.
	For compactly supported $\mu$ all quantities are finite for every
	real $s$. The free energy is strictly convex unless $\mu$ is a point
	mass, and $\Leh_\mu$ is real-analytic. The next theorem is the
	master identity of the theory.
	
	\begin{theorem}[Master identity]\label{thm:master}
		Let $\mu$ be compactly supported in $\Rpos$ and not a point mass.
		For all real $s$,
		\begin{equation}\label{eq:master}
			\Leh_\mu(s)=\E_{\mu_{s-1}}[X]=\exp\bigl(K_\mu(s)-K_\mu(s-1)\bigr).
		\end{equation}
		The derivatives of the free energy are escort moments,
		\begin{equation}\label{eq:Kderiv}
			K_\mu'(u)=\E_{\mu_u}[\log X],
		\end{equation}
		\begin{equation}\label{eq:Kderiv2}
			K_\mu''(u)=\Var_{\mu_u}(\log X).
		\end{equation}
		The slope of the transform is an escort covariance,
		\begin{equation}\label{eq:covid}
			\Leh_\mu'(s)=\Cov_{\mu_{s-1}}\bigl(X,\log X\bigr)
			=\Leh_\mu(s)\bigl[K_\mu'(s)-K_\mu'(s-1)\bigr]>0 .
		\end{equation}
		The slope of its logarithm is a Jeffreys divergence,
		\begin{equation}\label{eq:jeffreysslope}
			\Lambda_\mu'(s)=\int_{s-1}^{s}K_\mu''(u)\dd u=J(\mu_{s-1},\mu_s),
			\qquad
			J(P,Q)=\KL(P\|Q)+\KL(Q\|P).
		\end{equation}
		If $\supp\mu\subset[m,M]$ then, for all $s$,
		\begin{equation}\label{eq:gruss}
			\Leh_\mu'(s)\le\tfrac14(M-m)\log(M/m).
		\end{equation}
	\end{theorem}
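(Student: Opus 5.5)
Everything rests on differentiating under the integral sign, which is legitimate because $\mu$ is compactly supported in $[m,M]\subset\Rpos$. On that set $|\log x|\le C$, so $x^{s}(\log x)^k$ is bounded uniformly for $s$ in compact sets, and $Z_\mu$ is finite and real-analytic with $Z_\mu^{(k)}(u)=\int x^{u}(\log x)^k\dd\mu$. I would prove the statements in the order written.

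\emph{Master identity.} Write $x^{s}=x\cdot x^{s-1}$ and divide by $Z_\mu(s-1)$. This gives $\Leh_\mu(s)=\int x\,\dd\mu_{s-1}=\E_{\mu_{s-1}}[X]$, and the exponential form is simply $\log$ of the ratio of partition functions.

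\emph{Derivatives of the free energy.} $K_\mu'=Z_\mu'/Z_\mu$ is the escort mean of $\log X$. The formula $K_\mu''=Z_\mu''/Z_\mu-(Z_\mu'/Z_\mu)^2$ gives the escort variance. This variance is strictly positive because $\mu_u$ is equivalent to $\mu$ and $\mu$ is not a point mass.

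\emph{Slope of the transform.} Differentiating $\E_{\mu_{s-1}}[X]=Z_\mu(s)/Z_\mu(s-1)$ gives
\[
\Leh_\mu'(s)=\Leh_\mu(s)\bigl[K_\mu'(s)-K_\mu'(s-1)\bigr].
\]
The bracket equals $\int_{s-1}^{s}K_\mu''>0$. Alternatively, differentiate $\int x\,x^{s-1}\dd\mu/Z_\mu(s-1)$ directly to get
\[
\E_{\mu_{s-1}}[X\log X]-\E_{\mu_{s-1}}[X]\,\E_{\mu_{s-1}}[\log X],
\]
which is the covariance. The two expressions agree since $\E_{\mu_{s-1}}[X\log X]=\Leh_\mu(s)\,\E_{\mu_s}[\log X]=\Leh_\mu(s)K_\mu'(s)$. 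Positivity also follows from Chebyshev's association inequality, because $X$ and $\log X$ are both increasing functions of $X$.

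\emph{Jeffreys divergence.} $\Lambda_\mu=K_\mu(s)-K_\mu(s-1)$, so $\Lambda_\mu'=K_\mu'(s)-K_\mu'(s-1)=\int_{s-1}^sK_\mu''$. For the divergence, use $\log(\dd\mu_s/\dd\mu_{s-1})=\log x-(K_\mu(s)-K_\mu(s-1))$. Then
\[
\KL(\mu_s\|\mu_{s-1})=K_\mu'(s)-\Lambda_\mu(s),\qquad \KL(\mu_{s-1}\|\mu_s)=-K_\mu'(s-1)+\Lambda_\mu(s).
\]
Their sum is $K_\mu'(s)-K_\mu'(s-1)$.

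\emph{Grüss-type bound.} This is the only step needing care. Apply the covariance form: under $\mu_{s-1}$, $X\in[m,M]$ and $\log X\in[\log m,\log M]$. Cauchy–Schwarz gives $|\Cov(X,\log X)|\le\sqrt{\Var X\,\Var\log X}$. Popoviciu's inequality gives $\Var Y\le(b-a)^2/4$ for a variable in $[a,b]$. Combining these yields $\Leh_\mu'(s)\le\tfrac14(M-m)\log(M/m)$.

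The main obstacle is only bookkeeping: making the justification for differentiating under the integral sign explicit, and keeping track of which escort order $s$ or $s-1$ appears in each identity. None of the steps is hard beyond that.
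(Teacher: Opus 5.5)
Your proof is correct. The paper does not prove this theorem itself; it recalls it from Ataei's companion work. Your argument is the standard exponential-family calculus that the surrounding text describes: escort moments arise as derivatives of $K_\mu$, and the likelihood ratio $\dd\mu_s/\dd\mu_{s-1}=x/\Leh_\mu(s)$ gives the Jeffreys divergence. The final bound is Gr\"uss's inequality, and your Cauchy--Schwarz plus Popoviciu argument is its usual proof; the paper itself uses the same Popoviciu bound $K_\mu''\le\ell_\mu^{2}/4$ in the proof of Proposition~\ref{prop:limits}(v).
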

	
	The transform at order $s$ is the mean of the signal under the escort
	of order $s-1$, and its slope is a dispersion of the tilted signal.
	The slope of its logarithm measures how distinguishable the tilted
	ensembles at inverse temperatures $s-1$ and $s$ are, or equivalently
	it is the Fisher information $I(u)=K_\mu''(u)$ of the escort family
	averaged over one unit of suddency. For a signal write
	\begin{equation}\label{eq:powersums}
		P_u=\sum_ix_i^{u},
		\qquad
		Q_u=\sum_ix_i^{u}\log x_i .
	\end{equation}
	Then the transform and the covariance identity \eqref{eq:covid} read
	\begin{equation}\label{eq:powersumslope}
		\Leh(s)=\frac{P_s}{P_{s-1}},
	\end{equation}
	\begin{equation}\label{eq:powersumslope2}
		\Leh'(s)=\frac{P_s}{P_{s-1}}\Bigl[\frac{Q_s}{P_s}-\frac{Q_{s-1}}{P_{s-1}}\Bigr].
	\end{equation}
	No differentiation is needed to evaluate the slope.
	
	\begin{theorem}[Shape of the curve]\label{thm:statgen}
		Let $\mu$ be compactly supported with
		$m=\min\supp\mu<M=\max\supp\mu$. Then $\Leh_\mu:\R\to(m,M)$ is a
		strictly increasing real-analytic bijection onto the open interval.
		The values $\Leh_\mu(0)$, $\Leh_\mu(1)$ and $\Leh_\mu(2)$ are the
		harmonic, arithmetic and contraharmonic means. Suppose that $\mu$ is finitely supported, let $w_M$ be the mass of
		$M$, and let $M'$ be the largest support point below $M$, with mass
		$w_M'$. Then, as $s\to\infty$,
		\begin{equation}\label{eq:georate}
			M-\Leh_\mu(s)\sim\frac{w_M'}{w_M}\,M\Bigl(1-\frac{M'}{M}\Bigr)\Bigl(\frac{M'}{M}\Bigr)^{s-1}.
		\end{equation}
		The mirror statement holds at the minimum as $s\to-\infty$, with
		the masses of $m$ and of the smallest support point above it.
	\end{theorem}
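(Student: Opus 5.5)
The proof splits into three parts: the qualitative shape (monotonicity, range, analyticity), the special values, and the geometric rate.

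\textbf{Monotonicity, analyticity and range.} Since $\mu$ is compactly supported in $\Rpos$, we have $\supp\mu\subset[m,M]$ with $m>0$. Hence $Z_\mu(s)=\int x^{s}\dd\mu$ is finite for every real $s$ and is an entire function of $s$: differentiation under the integral sign is justified because $|\log x|$ is bounded on the support. Since $Z_\mu>0$, the quotient $\Leh_\mu(s)=Z_\mu(s)/Z_\mu(s-1)$ is real-analytic. Strict increase is \eqref{eq:covid} of Theorem~\ref{thm:master}, which applies because $\mu$ is not a point mass when $m<M$.

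By \eqref{eq:master}, $\Leh_\mu(s)=\E_{\mu_{s-1}}[X]$. Each escort $\mu_{s-1}$ is supported in $[m,M]$ and is not a point mass, so $m<\Leh_\mu(s)<M$.

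For surjectivity onto $(m,M)$ it suffices, by continuity and monotonicity, to show $\Leh_\mu(s)\to M$ as $s\to\infty$; the limit $m$ at $-\infty$ is the mirror statement. Fix $\varepsilon>0$ and write $\mu_{u}([m,M-\varepsilon])$ as a ratio. The numerator is at most $(M-\varepsilon)^{u}\mu(\Rpos)$. The denominator is at least $(M-\varepsilon/2)^{u}\mu([M-\varepsilon/2,M])$, and this mass is positive because $M=\max\supp\mu$. The ratio therefore decays geometrically, so $\mu_u$ concentrates near $M$ and $\E_{\mu_u}[X]\to M$.

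\textbf{Special values.} These are direct substitutions:
\[
\Leh_\mu(0)=\frac{\mu(\Rpos)}{\int x^{-1}\dd\mu},\qquad \Leh_\mu(1)=\frac{\int x\dd\mu}{\mu(\Rpos)},\qquad \Leh_\mu(2)=\frac{\int x^2\dd\mu}{\int x\dd\mu}.
\]
These are the harmonic, arithmetic and contraharmonic means of the normalized measure.

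\textbf{Rate for finitely supported $\mu$.} Write $\mu=\sum_j w_j\delta_{x_j}$ and set $r=M'/M<1$. Factor out $M^{s}$:
\[
Z_\mu(s)=w_M M^{s}\bigl(1+\rho r^{s}+O(q^{s})\bigr),\qquad \rho=\frac{w_{M'}}{w_M},
\]
where $q<r$ is the ratio of the third largest support point to $M$. If there is no third point, the error term vanishes. Then
\[
\Leh_\mu(s)=M\,\frac{1+\rho r^{s}+O(q^{s})}{1+\rho r^{s-1}+O(q^{s-1})}.
\]
Expanding to first order gives
\[
\Leh_\mu(s)=M\bigl(1+\rho r^{s}-\rho r^{s-1}+o(r^{s})\bigr),
\]
so that
\[
M-\Leh_\mu(s)\sim\rho\,M\,r^{s-1}(1-r),
\]
which is \eqref{eq:georate}. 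The mirror statement follows by applying this result to the pushforward of $\mu$ under $x\mapsto 1/x$. Let $\tilde\mu$ denote this pushforward. Then $Z_{\tilde\mu}(s)=Z_\mu(-s)$, and so $\Leh_\mu(s)=1/\Leh_{\tilde\mu}(1-s)$.

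\textbf{Main obstacle.} The step needing the most care is the error control in the rate expansion. One must check that the $O(q^{s})$ terms from the third and lower support points are genuinely $o(r^{s})$ uniformly after division. Because the support is finite, this is a finite-sum estimate.
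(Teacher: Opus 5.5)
Your proof is correct and complete. The paper gives no proof of this theorem; it is one of the results recalled from Ataei~\cite{ataei2026}. Your rate computation is nonetheless the same expansion the paper carries out in the proof of Theorem~\ref{thm:tails}: there the top atom is factored out of $\Leh_\nu(s)=L_+N(s)/N(s-1)$ and the leading coefficient \eqref{eq:a1} is read off. Your passage to the lower end through $x\mapsto1/x$ is the reflection identity of Proposition~\ref{prop:invariance}.

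The one step you leave implicit is the conversion back from the reflected measure, and it is routine. Since $\Leh_{\tilde\mu}(1-s)\to1/m$, one has $\Leh_\mu(s)-m\sim m^{2}\bigl(1/m-\Leh_{\tilde\mu}(1-s)\bigr)$. This yields $\Leh_\mu(s)-m\sim\frac{w_m'}{w_m}\,m\bigl(1-\frac{m}{m'}\bigr)\bigl(\frac{m'}{m}\bigr)^{s}$ as $s\to-\infty$, with $m'$ the smallest support point above $m$. This agrees with the left-tail asymptotic used in the proof of Theorem~\ref{thm:tails}.
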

	
	Since the curve is a strictly increasing analytic bijection, it has
	an analytic inverse, which converts a statistic back into the order
	that generates it. We call it the suddency map.
	
	\begin{proposition}[Inverse transform]\label{prop:inverse}
		Under the hypotheses of Theorem~\ref{thm:statgen} the inverse
		$\Leh_\mu^{-1}:(m,M)\to\R$ exists and is real-analytic, with
		\begin{equation}\label{eq:inversederivative}
			\frac{\dd}{\dd t}\Leh_\mu^{-1}(t)
			=\frac{1}{\Cov_{\mu_{s_t-1}}(X,\log X)},
			\qquad s_t=\Leh_\mu^{-1}(t).
		\end{equation}
		At the arithmetic node it has the expansion
		\begin{equation}\label{eq:suddencymap}
			\Leh_\mu^{-1}(t)=1+\frac{t-\Leh_\mu(1)}{\Cov_{\mu_0}(X,\log X)}
			+O\bigl((t-\Leh_\mu(1))^{2}\bigr),
		\end{equation}
		where $\mu_0=\mu/\mu(\Rpos)$. For a signal the inverse is
		computed by the Newton iteration
		\begin{equation}\label{eq:newton}
			s_{k+1}=s_k-\frac{\Leh(s_k)-t}{\Leh'(s_k)},
		\end{equation}
		with $\Leh$ and $\Leh'$ given by the power-sum formulas
		\eqref{eq:powersumslope} and \eqref{eq:powersumslope2}. The iteration converges quadratically near the root. Since the
		curve is flat in its tails, the iteration must be safeguarded by
		bisection on a bracketing interval, which the monotonicity of the
		curve makes trivial to find. For two atoms $m<M$ the inverse is
		a logit,
		\begin{equation}\label{eq:twopointinverse}
			\Leh_\mu^{-1}(t)=1+\frac{1}{\log(M/m)}\log\frac{t-m}{M-t},
			\qquad m<t<M .
		\end{equation}
	\end{proposition}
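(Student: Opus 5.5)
Existence and analyticity of the inverse will come from Theorem~\ref{thm:statgen} together with the covariance identity \eqref{eq:covid}. By Theorem~\ref{thm:statgen}, $\Leh_\mu:\R\to(m,M)$ is a strictly increasing real-analytic bijection, so a set-theoretic inverse $\Leh_\mu^{-1}:(m,M)\to\R$ exists. Theorem~\ref{thm:master} gives the strict inequality $\Leh_\mu'(s)=\Cov_{\mu_{s-1}}(X,\log X)>0$ for every $s$, and I will feed this into the real-analytic inverse function theorem at each point. That makes $\Leh_\mu^{-1}$ real-analytic near every $t\in(m,M)$, and the local inverses agree with the global one by monotonicity. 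Differentiating $\Leh_\mu(\Leh_\mu^{-1}(t))=t$ gives $(\Leh_\mu^{-1})'(t)=1/\Leh_\mu'(s_t)$, which is \eqref{eq:inversederivative} once \eqref{eq:covid} is substituted.

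For the expansion \eqref{eq:suddencymap} I will Taylor expand the analytic function $\Leh_\mu^{-1}$ around $t_0=\Leh_\mu(1)$. There $s_{t_0}=1$, so the first-order coefficient is $1/\Cov_{\mu_0}(X,\log X)$, and the remainder is $O((t-t_0)^2)$ by analyticity. I also need to check that the escort of order $0$ is $x^0\dd\mu/Z_\mu(0)=\mu/\mu(\Rpos)$, which holds directly by Definition~\ref{def:Z}.

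The Newton statement is standard and I will not prove it in depth. On a signal, $\Leh$ and $\Leh'$ are given exactly by \eqref{eq:powersumslope} and \eqref{eq:powersumslope2}, since these are just \eqref{eq:defL} and \eqref{eq:covid} written in power sums. The root is simple because $\Leh'>0$ and $\Leh$ is $C^2$, so local quadratic convergence follows from the classical Newton theorem. For the safeguarding remark, monotonicity together with the limits $m$ and $M$ from Theorem~\ref{thm:statgen} means that doubling a step outward from $s=1$ eventually brackets any $t\in(m,M)$.

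Finally, for the two-atom case with masses $w_m$ and $w_M$, I solve $t=(w_mm^s+w_MM^s)/(w_mm^{s-1}+w_MM^{s-1})$ explicitly. Setting $r=(M/m)^{s-1}w_M/w_m$ gives $t=(m+Mr)/(1+r)$, so $r=(t-m)/(M-t)$ and
\[
s=1+\frac{\log\bigl((t-m)/(M-t)\bigr)-\log(w_M/w_m)}{\log(M/m)} .
\]
This matches \eqref{eq:twopointinverse} exactly when $w_m=w_M$, as for a two-sample signal. The main obstacle I foresee is exactly this point: the displayed formula seems to presume equal masses. In the general case I would record the extra shift $-\log(w_M/w_m)/\log(M/m)$ and note that it vanishes for the empirical measure of two distinct observations.
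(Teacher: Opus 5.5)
Your proposal is correct and follows the paper's proof essentially step by step. The steps are the same: positivity of the escort covariance combined with the analytic inverse function theorem, the first Taylor (Lagrange--B\"urmann) coefficient at $t_0=\Leh_\mu(1)$, where the escort of order $0$ is $\mu/\mu(\Rpos)$, the classical local convergence theory of Newton's method, and an explicit solve for two atoms. Your caveat on that last step is also right. The paper's proof writes $\Leh_\mu(s)=(m^{s}+M^{s})/(m^{s-1}+M^{s-1})$, i.e.\ it tacitly assumes equal masses, and for masses $w_m\neq w_M$ the two-atom logit acquires exactly your shift $-\log(w_M/w_m)/\log(M/m)$.
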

	
	\begin{proof}
		By Theorem~\ref{thm:master} the derivative of $\Leh_\mu$ is
		positive, and by Theorem~\ref{thm:statgen} the curve is a
		real-analytic bijection onto $(m,M)$. The analytic inverse function
		theorem gives a real-analytic inverse, and the derivative of the
		inverse is the reciprocal of the covariance \eqref{eq:covid}. Expansion
		\eqref{eq:suddencymap} is the first Lagrange--B\"urmann coefficient
		at $t_0=\Leh_\mu(1)$, where the escort of order $0$ is the
		normalized measure. Newton's iteration for the analytic monotone equation $\Leh(s)=t$
		converges quadratically near the root.
		For two atoms, $\Leh_\mu(s)=(m^{s}+M^{s})/(m^{s-1}+M^{s-1})$, and
		solving $\Leh_\mu(s)=t$ gives $(M/m)^{s-1}=(t-m)/(M-t)$.
	\end{proof}
	
	The transform responds to transformations of the signal through an
	elementary calculus.
	
	\begin{theorem}[Calculus of the transform]\label{thm:calculus}
		Let $\mu$ be a positive measure whose partition function is finite
		at the orders concerned.
		\begin{enumerate}[label=\textup{(\roman*)},leftmargin=2.2em]
			\item The dilation $x\mapsto\lambda x$ sends $\Leh_\mu$ to
			$\lambda\Leh_\mu$.
			\item The power map $x\mapsto x^{r}$ sends $Z_\mu(s)$ to $Z_\mu(rs)$.
			\item Let $\mu\boxtimes\rho$ be the image of $\mu\otimes\rho$ under
			multiplication, which for probability laws is the law of a
			product of independent variables with laws $\mu$ and $\rho$.
			Then
			\begin{equation}\label{eq:character}
				Z_{\mu\boxtimes\rho}=Z_\mu Z_\rho,
				\qquad
				\Leh_{\mu\boxtimes\rho}=\Leh_\mu\Leh_\rho .
			\end{equation}
			If $\log X$ is infinitely divisible, the multiplicative
			convolution powers $\mu^{\boxtimes t}$ exist for $t>0$ and
			\begin{equation}\label{eq:power}
				\Leh_{\mu^{\boxtimes t}}=\Leh_\mu^{\,t}.
			\end{equation}
			\item The exponential law of mean $\theta$ has $\Leh(s)=\theta s$ for
			$s>0$.
			The lognormal law with parameters $(m_0,\sigma^{2})$ has
			\begin{equation}\label{eq:lognormalcurve}
				\Leh(s)=e^{m_0+\sigma^{2}(s-1/2)} .
			\end{equation}
		\end{enumerate}
	\end{theorem}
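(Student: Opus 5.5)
Every item follows from the definition $\Leh_\mu(s)=Z_\mu(s)/Z_\mu(s-1)$ once we know how the partition function $Z_\mu(s)=\int x^{s}\dd\mu(x)$ transforms, so the plan is to compute the image partition function in each case by the change-of-variables formula for image measures, $\int f\dd(T_\#\mu)=\int f\circ T\dd\mu$, and then take the ratio.

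For (i), with $T(x)=\lambda x$ we get $Z_{T_\#\mu}(s)=\int(\lambda x)^{s}\dd\mu=\lambda^{s}Z_\mu(s)$. Taking the ratio at consecutive orders leaves the factor $\lambda^{s}/\lambda^{s-1}=\lambda$. For (ii), with $T(x)=x^{r}$ we get $Z_{T_\#\mu}(s)=\int x^{rs}\dd\mu=Z_\mu(rs)$.

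For (iii), the image of $\mu\otimes\rho$ under $(x,y)\mapsto xy$ gives
\[
Z_{\mu\boxtimes\rho}(s)=\iint (xy)^{s}\dd\mu(x)\dd\rho(y)=Z_\mu(s)Z_\rho(s),
\]
by Tonelli, since the integrand is nonnegative; the product of Lehmer transforms then follows from the ratio. For the convolution powers, infinite divisibility of $\log X$ under the probability law $\mu$ gives a convolution semigroup $\nu_t$ on $\R$ with $\nu_1$ the law of $\log X$. Define $\mu^{\boxtimes t}$ as the image of $\nu_t$ under $\exp$. Then $Z_{\mu^{\boxtimes t}}(s)=\E e^{sY_t}$ is the moment generating function of $\nu_t$.

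This is the one delicate step. We need $\E e^{sY_t}=(\E e^{sY_1})^{t}$ at real $s$ where the right side is finite, i.e.\ $Z_{\mu^{\boxtimes t}}=Z_\mu^{\,t}$. For characteristic functions, $\varphi_t=\varphi_1^{t}$ holds by the Lévy--Khintchine representation. To pass to the real axis, I would use the fact that finiteness of the exponential moment of $\nu_1$ at $s$ implies the corresponding Lévy-measure integrability $\int_{|y|>1}e^{sy}\,\Pi(\dd y)<\infty$, which carries over to all $\nu_t$ (they share $\Pi$ up to the factor $t$). The cumulant exponent then extends analytically to the strip between $0$ and $s$, and $\log\E e^{sY_t}=t\log\E e^{sY_1}$ follows by analytic continuation (Sato's theorem on exponential moments of Lévy processes). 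Taking the ratio at $s$ and $s-1$ then gives $\Leh_{\mu^{\boxtimes t}}=\Leh_\mu^{\,t}$.

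For (iv), the exponential law of mean $\theta$ has
\[
Z(s)=\int_0^\infty x^{s}\theta^{-1}e^{-x/\theta}\dd x=\theta^{s}\Gamma(s+1),
\]
finite for $s>-1$. The ratio at orders $s$ and $s-1$ (both finite when $s>0$) is $\theta\,\Gamma(s+1)/\Gamma(s)=\theta s$. For the lognormal law, $\log X\sim N(m_0,\sigma^2)$, so $Z(s)=\exp(m_0s+\sigma^2s^2/2)$ and
\[
K(s)-K(s-1)=m_0+\tfrac{\sigma^2}{2}\bigl(s^2-(s-1)^2\bigr)=m_0+\sigma^2(s-\tfrac12),
\]
which gives \eqref{eq:lognormalcurve}. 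The lognormal case also illustrates (iii): it is infinitely divisible with $\Leh^{t}$ corresponding to the parameters $(tm_0,t\sigma^2)$.
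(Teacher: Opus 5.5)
Your proof is correct and follows the same route the paper relies on, namely direct computation of the image partition functions followed by taking ratios. The paper does not prove this theorem itself (Section~\ref{sec:lehmer} defers it to \cite{ataei2026}), and when it uses part (iii) in the proof of Theorem~\ref{thm:origin} it simply asserts $Z_{\rho_t}=Z_\rho^{\,t}$ for the convolution powers. Your appeal to Sato's exponential-moment theorem for L\'evy processes, passing from $\varphi_t=\varphi_1^{\,t}$ on the imaginary axis to real orders, supplies exactly the step the paper leaves implicit. Restricting to a probability $\mu$ at that point is harmless, since $\Leh_{c\mu}=\Leh_\mu$.
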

	
	Two consequences shape the construction of the next section. By
	(i) one may fix the lower endpoint of the range of the curve at any
	value, but not the upper one, since the ratio of the extremes is
	determined by the signal; moving the upper endpoint requires a power
	map, by (ii), and a power map also rescales the suddency axis. By
	(iii) and (iv), the curve of a product of an exponential and a
	lognormal variable is the product of an affine and a log-affine
	curve, whose inverse is the Lambert function, and this is where the
	Lambert function enters the theory.
	
	\section{Lehmer--Lambert distribution}\label{sec:ll}
	
	Two operations turn the curve into a law. A standardization of the
	signal fixes the range of the curve, and an increasing map then
	sends that range onto the unit interval. We fix the standardization
	first and let the parameters of the map vary afterwards, so that the
	sample space of the family does not move with the parameters.
	
	\begin{definition}[Standardization]\label{def:standard}
		Let $\mu$ be compactly supported and not a point mass, with
		extremes $m<M$. Write
		\begin{equation}\label{eq:ellmu}
			\ell_\mu\eqdef\log\frac{M}{m}.
		\end{equation}
		Fix a level $L_+>1$ and write $\ell\eqdef\log L_+$. The
		\emph{standardization} of $\mu$ is the pushforward $\nu$ of $\mu$
		under the power map
		\begin{equation}\label{eq:standardization}
			x\longmapsto\Bigl(\frac xm\Bigr)^{\tau},
			\qquad
			\tau\eqdef\frac{\log L_+}{\log(M/m)} .
		\end{equation}
		For a signal the standardized values are $y_i=(x_i/m)^{\tau}$.
	\end{definition}
	
	The measure $\nu$ is carried by $[1,L_+]$, with $1$ and $L_+$ its
	extremes. By Theorem~\ref{thm:statgen}, $\Leh_\nu$ is a strictly
	increasing bijection of $\R$ onto $(1,L_+)$.
	
	\begin{definition}[Lehmer--Lambert distribution]\label{def:ll}
		For $\alpha>0$ and $\beta\ge0$ put
		\begin{equation}\label{eq:G}
			G(z)\eqdef z^{1/\alpha}e^{\beta z},
			\qquad 1\le z\le L_+ ,
		\end{equation}
		\begin{equation}\label{eq:AC}
			A\eqdef G(L_+),
			\qquad
			C\eqdef\frac{1}{A-e^{\beta}} .
		\end{equation}
		The \emph{Lehmer--Lambert distribution} of $\mu$ with parameters
		$(\alpha,\beta)$, written $\LL(\alpha,\beta)$, the level $L_+$ and
		the signal $\mu$ being understood, is the law of the random suddency
		moment $S$ with distribution function
		\begin{equation}\label{eq:llF}
			F_S(s)=C\bigl(G(\Leh_\nu(s))-e^{\beta}\bigr),
			\qquad s\in\R .
		\end{equation}
		The value $\Leh_\nu(s)$ of the transform at a fixed order $s$ is the
		Lehmer mean of order $s$ of the standardized signal, and the
		variable
		\begin{equation}\label{eq:Zdef}
			Z\eqdef\Leh_\nu(S),
		\end{equation}
		the Lehmer mean taken at the random order $S$, is the \emph{random
			Lehmer mean} of the signal.
	\end{definition}
	
	The map $G$ increases on $[1,L_+]$ from $e^{\beta}$ to $A$ and the
	curve $\Leh_\nu$ increases from $1$ to $L_+$, so the right side of
	definition \eqref{eq:llF} increases continuously from $0$ to $1$ and is a
	distribution function. In explicit form,
	\begin{equation}\label{eq:llFexplicit}
		F_S(s)=\frac{\Leh_\nu(s)^{1/\alpha}e^{\beta\Leh_\nu(s)}-e^{\beta}}
		{L_+^{1/\alpha}e^{\beta L_+}-e^{\beta}} .
	\end{equation}
	The random Lehmer mean takes values in $(1,L_+)$, and its
	distribution function is $C(G(z)-e^{\beta})$ whatever the signal.
	The parameters therefore act on the random Lehmer mean, while the
	signal enters only through the inverse curve $\Leh_\nu^{-1}$ of
	Proposition~\ref{prop:inverse}. The map $G$ was chosen by
	Ataei and Wang~\cite{ataei2022} for the closed-form normalization it
	affords, but it belongs to the theory, because it is itself a
	Lehmer curve and the distribution function \eqref{eq:llF} is then a
	composition of two Lehmer curves. Other increasing maps are Lehmer
	curves too, and the composition principle applies to each of them;
	the map $G$ is the one that brings in the Lambert function.
	
	\begin{theorem}[Composition of two curves]
		\label{thm:origin}
		Let $E$ be standard exponential and $\Lambda$ lognormal with
		parameters $(\sigma^{2}/2,\sigma^{2})$, independent. Let $\rho$ be
		the law of $E\Lambda$. Then, for $s>0$,
		\begin{equation}\label{eq:rhocurve}
			\Leh_\rho(s)=s\,e^{\sigma^{2}s},
		\end{equation}
		\begin{equation}\label{eq:rhoinverse}
			\Leh_\rho^{-1}(t)=\frac{W_0(\sigma^{2}t)}{\sigma^{2}} .
		\end{equation}
		The variable $\log(E\Lambda)$ is infinitely divisible. So $\rho$
		generates the semigroup $\rho_t=\rho^{\boxtimes t}$, $t>0$, with
		\begin{equation}\label{eq:rhotcurve}
			\Leh_{\rho_t}=\Leh_\rho^{\,t}.
		\end{equation}
		With $t=1/\alpha$ and $\sigma^{2}=\alpha\beta$,
		\begin{equation}\label{eq:GisLehmer}
			G=\Leh_{\rho_t},
		\end{equation}
		\begin{equation}\label{eq:composition}
			F_S(s)=\frac{\Leh_{\rho_t}\bigl(\Leh_\nu(s)\bigr)-\Leh_{\rho_t}(1)}
			{\Leh_{\rho_t}(L_+)-\Leh_{\rho_t}(1)} .
		\end{equation}
		Let $V$ be uniform on $(\Leh_{\rho_t}(1),\Leh_{\rho_t}(L_+))$. Then
		\begin{equation}\label{eq:Srep}
			S=\Leh_\nu^{-1}\bigl(\Leh_{\rho_t}^{-1}(V)\bigr)
			\sim\LL(\alpha,\beta).
		\end{equation}
	\end{theorem}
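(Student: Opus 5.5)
The plan is to assemble the statement from the calculus of Theorem~\ref{thm:calculus}, working at the level of partition functions.

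\textbf{Curve of $\rho$.} By the character property \eqref{eq:character}, $\Leh_\rho=\Leh_E\Leh_\Lambda$. Part (iv) gives $\Leh_E(s)=s$ for $s>0$. For $\Lambda$ with $m_0=\sigma^2/2$, formula \eqref{eq:lognormalcurve} gives $\Leh_\Lambda(s)=e^{\sigma^2/2+\sigma^2(s-1/2)}=e^{\sigma^2 s}$. This yields \eqref{eq:rhocurve}. Partition functions must be finite at both orders $s$ and $s-1$. For $E$ we have $Z_E(u)=\Gamma(u+1)$, finite for $u>-1$, so the restriction $s>0$ is exactly what is needed. $Z_\Lambda$ is entire.

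\textbf{Inverse.} Solve $t=se^{\sigma^2 s}$ with $s>0$. Multiply by $\sigma^2$ to get $\sigma^2 t=(\sigma^2 s)e^{\sigma^2 s}$. The map $w\mapsto we^w$ is a bijection of $(0,\infty)$ onto itself with inverse the principal branch $W_0$, which gives \eqref{eq:rhoinverse}.

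\textbf{Infinite divisibility.} $\log(E\Lambda)=\log E+\log\Lambda$ with the two summands independent. $\log\Lambda$ is Gaussian. $\log E$ has the law of minus a standard Gumbel variable, which is infinitely divisible. One way to see this is its cumulant function $\log\Gamma(1+u)$, which has a Lévy--Khintchine representation via the Malmstén integral for $\log\Gamma$. Alternatively, cite the standard result that the Gumbel law is infinitely divisible. A sum of independent infinitely divisible variables is infinitely divisible, so \eqref{eq:power} applies and gives \eqref{eq:rhotcurve}.

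\textbf{Identifying $G$.} Setting $t=1/\alpha$ and $\sigma^2=\alpha\beta$ gives
\[
\Leh_\rho(z)^{1/\alpha}=z^{1/\alpha}e^{\beta z}=G(z),
\]
valid for $z\in[1,L_+]\subset(0,\infty)$. This is \eqref{eq:GisLehmer}. The case $\beta=0$ needs $\sigma^2=0$: there $\Lambda$ degenerates to the constant $1$, and the statement is read with that convention.

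Substituting into \eqref{eq:llF}, with $e^\beta=G(1)$ and $A=G(L_+)$, gives \eqref{eq:composition}.

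\textbf{Representation \eqref{eq:Srep}.} This is inverse-transform sampling. $F_S$ is continuous and strictly increasing on $\R$ onto $(0,1)$, since it is a composition of strictly increasing bijections. Hence
\[
S=F_S^{-1}(U),\qquad U=\frac{V-G(1)}{G(L_+)-G(1)}\ \text{uniform on }(0,1),
\]
and $F_S^{-1}(u)=\Leh_\nu^{-1}\bigl(G^{-1}(G(1)+u(A-G(1)))\bigr)$. The inverse $\Leh_\nu^{-1}$ exists by Proposition~\ref{prop:inverse}.

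The main obstacle is the infinite-divisibility claim for $\log E$, together with making sense of $\rho^{\boxtimes t}$. Everything else is direct substitution.

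On $\rho^{\boxtimes t}$: the convolution power can be built explicitly as the law of $e^{Y_t}$, where $Y_t$ is a Lévy process at time $t$ with $Y_1\overset{d}{=}\log(E\Lambda)$. Its partition function is then $Z_\rho^{\,t}$ wherever finite, which in particular holds for orders above $-1$.
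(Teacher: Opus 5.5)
Your proposal is correct and follows the paper's proof essentially step for step: the character property combined with the exponential and lognormal curves, Lambert inversion on the principal branch, infinite divisibility of $\log E$ (a reflected Gumbel variable) together with the Gaussian $\log\Lambda$, the identification $\Leh_\rho^{1/\alpha}=G$, and inverse-transform sampling for \eqref{eq:Srep}. Your extra checks are sound refinements that the paper leaves implicit: finiteness of $Z_E$ forcing $s>0$, the degenerate case $\beta=0$, and running the sampling argument from $V$ to $S$ rather than from $S$ to $V$.
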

	
	The parameters thereby acquire meaning. The number $t=1/\alpha$
	counts the exponential--lognormal factors in the reference law, and
	$\sigma^{2}=\alpha\beta$ is the lognormal variance of each factor.
	The Lambert function enters because $W_0$ inverts $se^{\sigma^{2}s}$,
	and the Omega constant $\Omega=W_0(1)$ solves
	\begin{equation}\label{eq:omegadef}
		\Omega e^{\Omega}=1 .
	\end{equation}
	It is the suddency moment at which the exponential--lognormal
	product with $\sigma^{2}=1$ has unit Lehmer mean. The curve
	$\Leh_\rho$ is defined for positive orders only, and the composition
	uses $\Leh_{\rho_t}$ on $[1,L_+]$ alone. Read as a recipe, representation \eqref{eq:composition}
	says that any law $\rho$ with a known curve gives a distribution on
	the suddency line by the same composition. The exponential reference
	gives the uniform map, the lognormal reference gives the map
	$e^{\sigma^{2}z}$, and the inverse curve of the lognormal, $\log z$,
	gives a parameter-free law that we obtain below as the logarithmic
	limit of the family. We develop the exponential--lognormal case, the natural composition
	with a Lambert quantile.
	
	\begin{theorem}[Density and quantiles]
		\label{thm:basic}
		Let $S\sim\LL(\alpha,\beta)$ and $Z=\Leh_\nu(S)$.
		\begin{enumerate}[label=\textup{(\roman*)},leftmargin=2.2em]
			\item The random Lehmer mean has density
			\begin{equation}\label{eq:lgdensity}
				f_Z(z)=C\Bigl(\frac1\alpha+\beta z\Bigr)z^{\frac1\alpha-1}e^{\beta z},
				\qquad 1<z<L_+ .
			\end{equation}
			\item The suddency moment has density
			\begin{equation}\label{eq:ll}
				f_S(s)=f_Z\bigl(\Leh_\nu(s)\bigr)\,\Cov_{\nu_{s-1}}\bigl(X,\log X\bigr).
			\end{equation}
			For a signal, with $P_u$ and $Q_u$ the power sums of
			definition \eqref{eq:powersums} for the standardized values,
			\begin{equation}\label{eq:llpower}
				\Cov_{\nu_{s-1}}\bigl(X,\log X\bigr)
				=\frac{P_s}{P_{s-1}}\Bigl[\frac{Q_s}{P_s}-\frac{Q_{s-1}}{P_{s-1}}\Bigr].
			\end{equation}
			\item For $\beta>0$ the quantile function of $Z$ is
			\begin{equation}\label{eq:Qquantile}
				Q(p)=\frac{1}{\alpha\beta}\,
				W_0\Bigl(\alpha\beta\bigl(\tfrac{p}{C}+e^{\beta}\bigr)^{\alpha}\Bigr),
				\qquad 0<p<1 ,
			\end{equation}
			and at $\beta=0$ it is $Q(p)=(p/C+1)^{\alpha}$.
			The quantile function of $S$ is
			\begin{equation}\label{eq:quantile}
				F_S^{-1}(p)=\Leh_\nu^{-1}\bigl(Q(p)\bigr).
			\end{equation}
			\item The variable $G(Z)$ is uniform on $[e^{\beta},e^{\beta}+C^{-1}]$.
			Conversely, let $U$ be uniform on $(0,1)$. Then
			\begin{equation}\label{eq:sampling}
				S=\Leh_\nu^{-1}\bigl(Q(U)\bigr)\sim\LL(\alpha,\beta).
			\end{equation}
			\item The survival function and the hazard rate are
			\begin{equation}\label{eq:survival}
				1-F_S(s)=C\bigl(A-G(\Leh_\nu(s))\bigr),
				\qquad
				h_S(s)=\frac{f_S(s)}{C\bigl(A-G(\Leh_\nu(s))\bigr)} .
			\end{equation}
		\end{enumerate}
	\end{theorem}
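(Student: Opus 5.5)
The plan is to work entirely through the representation $F_S=F_Z\circ\Leh_\nu$, where $F_Z(z)=C(G(z)-e^{\beta})$ on $[1,L_+]$. By Theorem~\ref{thm:statgen}, $\Leh_\nu$ is a strictly increasing real-analytic bijection of $\R$ onto $(1,L_+)$, and by Proposition~\ref{prop:inverse} its inverse is analytic. Hence $F_Z(z)=\Prob(Z\le z)=\Prob(S\le\Leh_\nu^{-1}(z))=F_S(\Leh_\nu^{-1}(z))$, which shows that $Z$ has distribution function $C(G(z)-e^{\beta})$ on $(1,L_+)$.

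For (i), I will differentiate $G(z)=z^{1/\alpha}e^{\beta z}$. This gives $G'(z)=(\tfrac1\alpha z^{1/\alpha-1}+\beta z^{1/\alpha})e^{\beta z}=(\tfrac1\alpha+\beta z)z^{1/\alpha-1}e^{\beta z}$, and multiplying by $C$ yields \eqref{eq:lgdensity}.

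For (ii), the chain rule gives $f_S(s)=F_Z'(\Leh_\nu(s))\,\Leh_\nu'(s)$. I will then substitute the covariance identity \eqref{eq:covid} of Theorem~\ref{thm:master}, applied to $\nu$; this is legitimate because $\nu$ is compactly supported on $[1,L_+]$ and is not a point mass. The power-sum form \eqref{eq:llpower} is exactly \eqref{eq:powersumslope2} for the standardized values $y_i$.

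For (iii), I solve $C(G(z)-e^{\beta})=p$, that is, $z^{1/\alpha}e^{\beta z}=p/C+e^{\beta}\eqdef v$. Raising to the power $\alpha$ gives $z e^{\alpha\beta z}=v^{\alpha}$, and multiplying by $\alpha\beta$ gives $(\alpha\beta z)e^{\alpha\beta z}=\alpha\beta v^{\alpha}$. Since $\alpha\beta v^{\alpha}>0$, the principal branch applies and gives $\alpha\beta z=W_0(\alpha\beta v^{\alpha})$. This yields \eqref{eq:Qquantile}, with uniqueness because $we^{w}$ is increasing on $w>0$. At $\beta=0$ the equation is $z^{1/\alpha}=p/C+1$. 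The quantile of $S$ follows from the composition, since $F_S^{-1}=\Leh_\nu^{-1}\circ F_Z^{-1}$ for a composition of continuous strictly increasing bijections.

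For (iv), I note that $F_Z(Z)$ is uniform on $(0,1)$ because $F_Z$ is continuous. Therefore $G(Z)=e^{\beta}+F_Z(Z)/C$ is uniform on $[e^{\beta},e^{\beta}+C^{-1}]=[e^{\beta},A]$. The converse is inverse-transform sampling with the quantile from (iii). For (v), I use $1-F_S=1-CG(\Leh_\nu)+Ce^{\beta}$ together with $1=C(A-e^{\beta})$, and then $h_S=f_S/(1-F_S)$.

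There is no real obstacle in this proof. The only point needing care is branch selection in (iii): I must check that the argument $\alpha\beta v^{\alpha}$ is positive and that the relevant root satisfies $\alpha\beta z>0$, which forces the principal branch $W_0$.
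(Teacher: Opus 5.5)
Your proposal is correct and follows the paper's proof essentially step for step: $f_Z=CG'$, the chain rule combined with the covariance identity \eqref{eq:covid} applied to $\nu$, solving $(\alpha\beta z)e^{\alpha\beta z}=\alpha\beta(p/C+e^{\beta})^{\alpha}$ on the principal branch, the probability integral transform for (iv), and the normalization $C(A-e^{\beta})=1$ for (v). The only difference is cosmetic: you phrase (iv) through $F_Z(Z)$ where the paper uses $F_S(S)$, and these are the same random variable.
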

	
	The density formula \eqref{eq:ll} has two factors. The first depends on the
	parameters and on the random Lehmer mean only, while the second is
	the escort covariance of the standardized signal, which is where the
	structure of the signal enters. Sampling by the representation \eqref{eq:sampling} is exact, the
	only numerical step being the inversion of the monotone analytic
	curve by the Newton iteration \eqref{eq:newton}.
	
	\begin{proposition}[Invariance]
		\label{prop:invariance}
		Let $\pi$ be the \emph{log-profile} of $\mu$, the pushforward of
		$\mu/\mu(\Rpos)$ under
		\begin{equation}\label{eq:theta}
			\theta(x)=\frac{\log x-\log m}{\ell_\mu}\in[0,1].
		\end{equation}
		Let $K_\pi(t)=\log\int_0^1e^{t\theta}\dd\pi(\theta)$ be its cumulant
		generating function. Then
		\begin{equation}\label{eq:profilecurve}
			\Lambda_\nu(s)=K_\pi(\ell s)-K_\pi(\ell s-\ell),
			\qquad
			y_i=L_+^{\theta_i}.
		\end{equation}
		The Lehmer--Lambert distribution depends on $\mu$ only through its
		log-profile. It is invariant under
		\begin{equation}\label{eq:gaugegroup}
			x\longmapsto c\,x^{r},\qquad c,r>0,
		\end{equation}
		and the log-profile is a maximal invariant of that action. Under
		$x\mapsto1/x$ the log-profile is reflected and
		\begin{equation}\label{eq:reflection}
			\Leh_{\check\nu}(s)=\frac{L_+}{\Leh_\nu(1-s)} .
		\end{equation}
	\end{proposition}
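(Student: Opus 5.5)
The plan is to compute everything directly from the definition of the standardization and the partition function, and then read off invariance from the formula \eqref{eq:llF}.

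\emph{Step 1: the profile formula \eqref{eq:profilecurve}.} Write $\mu_0=\mu/\mu(\Rpos)$. Normalizing $\mu$ multiplies $Z$ by a constant, which cancels in the ratio $\Leh$, so we may work with $\mu_0$. A point $x$ in the support maps to $y=(x/m)^\tau=\exp(\tau\ell_\mu\theta(x))=L_+^{\theta(x)}$, because $\tau\ell_\mu=\ell$; this gives $y_i=L_+^{\theta_i}$. Hence
\[
Z_\nu(s)\propto\int_0^1 e^{s\ell\theta}\dd\pi(\theta)=e^{K_\pi(\ell s)},
\]
and taking the logarithm of the ratio at orders $s$ and $s-1$ gives \eqref{eq:profilecurve}.

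\emph{Step 2: dependence on the log-profile only.} By \eqref{eq:llF}, $F_S$ depends on $\mu$ only through $\Leh_\nu$. By Step~1, $\Leh_\nu=\exp\Lambda_\nu$ is determined by $\pi$ once $L_+$ is fixed. Since $G$, $A$ and $C$ involve only $(\alpha,\beta,L_+)$, the law $\LL(\alpha,\beta)$ is a function of $\pi$.

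\emph{Step 3: invariance and maximal invariance.} Under $x\mapsto cx^r$ with $c,r>0$ the map is increasing. The extremes go to $cm^r$ and $cM^r$, so $\ell_\mu$ becomes $r\ell_\mu$, and
\[
\theta(cx^r)=\frac{r\log x-r\log m}{r\ell_\mu}=\theta(x).
\]
Thus $\pi$ is unchanged, and the law is invariant by Step~2.

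For maximality, suppose $\mu_0$ and $\mu_0'$ have the same profile $\pi$. Each normalized measure is recovered as the pushforward of $\pi$ under $\theta\mapsto m e^{\ell_\mu\theta}$. The two measures are therefore related by $x\mapsto m'(x/m)^{r}$ with $r=\ell_{\mu'}/\ell_\mu$, which is of the form $cx^r$. The one subtlety is that invariance is claimed for the normalized measure or the law, not for the total mass. I expect this to be the main point needing care: the maximal invariance statement must be read modulo total mass, because the group \eqref{eq:gaugegroup} does not change the mass while $\pi$ forgets it.

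\emph{Step 4: reflection \eqref{eq:reflection}.} Under $x\mapsto1/x$ the new minimum is $1/M$ and $\ell$ is unchanged, so $\check\theta=1-\theta$ and $\check\pi$ is the reflection of $\pi$. Then
\[
K_{\check\pi}(t)=t+K_\pi(-t).
\]
Substituting into \eqref{eq:profilecurve} gives
\[
\Lambda_{\check\nu}(s)=\ell+K_\pi(-\ell s)-K_\pi(\ell-\ell s).
\]
By \eqref{eq:profilecurve} at the order $1-s$, this equals $\ell-\Lambda_\nu(1-s)$. Exponentiating yields $\Leh_{\check\nu}(s)=L_+/\Leh_\nu(1-s)$.
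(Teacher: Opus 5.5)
Your proposal is correct and follows essentially the same route as the paper. The identity $y=L_+^{\theta(x)}$ gives $Z_\nu(s)=\mu(\Rpos)e^{K_\pi(\ell s)}$. Invariance and maximality follow because $\theta$ is unchanged by $x\mapsto cx^r$ and $\pi$ determines $\mu/\mu(\Rpos)$ up to such a map. Your reflection step via $K_{\check\pi}(t)=t+K_\pi(-t)$ is the logarithmic form of the paper's $Z_{\check\nu}(s)=L_+^{s}Z_\nu(-s)$. Your caveat that maximal invariance must be read modulo total mass is accurate, and it matches the paper's own phrasing in terms of $\mu/\mu(\Rpos)$.
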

	
	The maps \eqref{eq:gaugegroup} are the changes of gain and of
	power-law calibration of the signal. Representation
	\eqref{eq:profilecurve} places the construction on the cumulant
	generating function of a probability measure on the unit interval,
	where the level $L_+$ is the length of the increment in the natural
	parameter and the random Lehmer mean is the exponential of a random
	increment of $K_\pi$.
	
	Several members and limits of the family are of independent
	interest. Two of them involve the extreme logarithmic gaps of the
	standardized signal,
	\begin{equation}\label{eq:gaps}
		r_+\eqdef\tau\log\frac{x_{(n)}}{x_{(n-1)}},
		\qquad
		r_-\eqdef\tau\log\frac{x_{(2)}}{x_{(1)}},
	\end{equation}
	the order statistics being taken among distinct values.
	
	\begin{proposition}[Special cases and limits]
		\label{prop:limits}\leavevmode
		\begin{enumerate}[label=\textup{(\roman*)},leftmargin=2.2em]
			\item At $\beta=0$,
			\begin{equation}\label{eq:powermap}
				F_S(s)=\frac{\Leh_\nu(s)^{1/\alpha}-1}{L_+^{1/\alpha}-1},
			\end{equation}
			and $Z^{1/\alpha}$ is uniform on $(1,L_+^{1/\alpha})$. At
			$\alpha=1$, $\beta=0$ the random Lehmer mean is uniform.
			\item As $\alpha\to\infty$ at $\beta=0$,
			\begin{equation}\label{eq:jeffreysF}
				F_S(s)\longrightarrow\frac{\log\Leh_\nu(s)}{\log L_+} .
			\end{equation}
			This is the \emph{Jeffreys law} of $\nu$, named after the
			Jeffreys divergence that appears in its density in the next
			section and not after the Jeffreys prior of the escort family,
			which is proportional to the square root of the Fisher
			information. Applied to $\mu$ without standardization it is the
			parameter-free law
			\begin{equation}\label{eq:jeffreysraw}
				F_S(s)=\frac{\Lambda_\mu(s)-\log m}{\ell_\mu} .
			\end{equation}
			Under it $\log\Leh_\mu(S)$ is uniform on $(\log m,\log M)$ and
			\begin{equation}\label{eq:logmean}
				\E\bigl[\Leh_\mu(S)\bigr]=\frac{M-m}{\ell_\mu},
			\end{equation}
			the logarithmic mean of the extremes.
			\item Let $L_+=1+\Omega$ and $\Leh_\rho(y)=ye^{y}$, the reference curve
			with $\sigma^{2}=1$. Then
			\begin{equation}\label{eq:canonical}
				F_S(s)=\Leh_\rho\bigl(\Leh_\nu(s)-1\bigr)
			\end{equation}
			is a distribution function, the \emph{canonical Omega law}, and
			$Y=\Leh_\nu(S)-1=W_0(U)$ with $U$ uniform on $(0,1)$.
			\item Let $\mu$ be finitely supported, let $w_M,w_M'$ be the masses
			of the largest support point and of the largest support point
			below it, and let $r_+$ be the upper gap of definition
			\eqref{eq:gaps}. Put
			\begin{equation}\label{eq:lambdaa}
				\lambda=\beta+\frac{1}{\alpha L_+},
				\qquad
				a_+=L_+\bigl(e^{r_+}-1\bigr)\frac{w_M'}{w_M} .
			\end{equation}
			As $\lambda\to\infty$,
			\begin{equation}\label{eq:gumbel}
				\lambda(L_+-Z)\xrightarrow{d}\mathrm{Exp}(1),
			\end{equation}
			\begin{equation}\label{eq:gumbel2}
				r_+S-\log(a_+\lambda)\xrightarrow{d}\mathrm{Gumbel}.
			\end{equation}
			This covers $\beta\to\infty$ and $\alpha\downarrow0$ alike.
			\item As $L_+\downarrow1$ at fixed $(\alpha,\beta)$, $\tau S$
			converges in law to the variable $T$ with density
			\begin{equation}\label{eq:infoprofile}
				f_T(u)=\frac{K_\mu''(u)}{\ell_\mu} .
			\end{equation}
			This is the Fisher information of the escort family of the raw
			signal, read as a probability density.
		\end{enumerate}
	\end{proposition}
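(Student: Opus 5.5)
The proposition has five parts, and I would prove them separately. Each part is a limit or a direct substitution in the explicit distribution function \eqref{eq:llFexplicit}, applied through $Z=\Leh_\nu(S)$. Parts (i)--(iii) are algebraic. Parts (iv) and (v) are the analytic ones, and they need Theorem~\ref{thm:statgen} and Theorem~\ref{thm:master}.

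\textbf{Parts (i)--(iii).} For (i), I set $\beta=0$ in \eqref{eq:llFexplicit}. This gives \eqref{eq:powermap}, and then $\Prob(Z^{1/\alpha}\le y)=(y-1)/(L_+^{1/\alpha}-1)$ on $(1,L_+^{1/\alpha})$, which is uniform; the case $\alpha=1$ is immediate.

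For (ii), I use $y^{1/\alpha}-1=\alpha^{-1}\log y+O(\alpha^{-2})$ uniformly for $y\in[1,L_+]$, applied to numerator and denominator. This gives \eqref{eq:jeffreysF}. Applying the same law to $\mu$ directly gives \eqref{eq:jeffreysraw}. Then $\Lambda_\mu(S)$ is uniform on $(\log m,\log M)$, since $\Leh_\mu$ is a bijection onto $(m,M)$ by Theorem~\ref{thm:statgen}. Hence $\E[\Leh_\mu(S)]=\ell_\mu^{-1}\int_{\log m}^{\log M}e^{v}\dd v=(M-m)/\ell_\mu$.

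For (iii), $Y=\Leh_\nu(S)-1$ increases from $0$ to $\Omega$. The map $y\mapsto ye^{y}$ increases from $0$ to $\Omega e^{\Omega}=1$ by \eqref{eq:omegadef}, so \eqref{eq:canonical} is a distribution function. Inverting it gives $Y=W_0(U)$.

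\textbf{Part (iv).} I would first show that $CA\to1$. Indeed $A/e^{\beta}=L_+^{1/\alpha}e^{\beta(L_+-1)}\to\infty$ whenever $\lambda\to\infty$, because then $\beta\to\infty$ or $1/\alpha\to\infty$.

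Next, $(\log G)'(z)=1/(\alpha z)+\beta$, which equals $\lambda$ at $z=L_+$. On $[L_+-x/\lambda,L_+]$ this derivative lies between $\lambda$ and $\lambda L_+/(L_+-x/\lambda)$, so it is $\lambda(1+o(1))$. Therefore
\[
\Prob\bigl(\lambda(L_+-Z)>x\bigr)=CA\,\frac{G(L_+-x/\lambda)}{A}-Ce^{\beta}\longrightarrow e^{-x},
\]
and $Ce^{\beta}\to0$ as well.

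For the Gumbel limit, I apply \eqref{eq:georate} to $\nu$. Here the extreme is $L_+$ and the ratio of the two largest support points is $e^{-r_+}$, so
\[
L_+-\Leh_\nu(s)=a_+e^{-r_+s}(1+o(1)),\qquad s\to\infty .
\]
Write $s_x=(x+\log(a_+\lambda))/r_+$, which tends to $\infty$ as $\lambda\to\infty$. Since $\Leh_\nu$ is monotone,
\[
\Prob(r_+S-\log(a_+\lambda)\le x)=\Prob\bigl(\lambda(L_+-Z)\ge \lambda\,(L_+-\Leh_\nu(s_x))\bigr)=\Prob\bigl(\lambda(L_+-Z)\ge e^{-x}(1+o(1))\bigr).
\]
By the exponential limit this tends to $\exp(-e^{-x})$. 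Continuity of the limit law absorbs the $o(1)$ term.

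\textbf{Part (v).} From the power map, $Z_\nu(s)=m^{-\tau s}Z_\mu(\tau s)$. With $u=\tau s$ this gives
\[
\Lambda_\nu(s)=K_\mu(u)-K_\mu(u-\tau)-\tau\log m=\tau\bigl(K_\mu'(u)-\log m\bigr)+O(\tau^{2}).
\]
The remainder is uniform in $u$, because $K_\mu''=\Var_{\mu_u}(\log X)\le\ell_\mu^2$ by \eqref{eq:Kderiv2} and compact support. I then Taylor expand $G$ at $1$ in both the numerator and the denominator $A-e^{\beta}$:
\[
G(1+\varepsilon)-e^{\beta}=e^{\beta}(1/\alpha+\beta)\varepsilon(1+O(\varepsilon)),
\]
with $\varepsilon=O(\tau\ell_\mu)$ in the numerator and $\varepsilon=L_+-1\sim\tau\ell_\mu$ in the denominator. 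This gives
\[
\Prob(\tau S\le u)\to\frac{K_\mu'(u)-\log m}{\ell_\mu}.
\]
This limit is a distribution function because $K_\mu'$ increases from $\log m$ to $\log M$, and its density is $K_\mu''/\ell_\mu$, which is \eqref{eq:infoprofile}.

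\textbf{Main obstacle.} I expect the hardest steps to be the uniformity arguments in (iv): the convergence $CA\to1$ has to hold along both regimes $\beta\to\infty$ and $\alpha\downarrow0$, and the tail asymptotic \eqref{eq:georate} has to be transferred through the monotone inverse at the moving point $s_x\to\infty$.
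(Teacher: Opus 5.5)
Your proposal is correct and follows the paper's proof essentially step by step. Parts (i)--(iii) go by direct substitution and inversion. Part (iv) shows $e^{\beta}/A\to0$ and $G(L_+-x/\lambda)/A\to e^{-x}$, then transfers the limit to $S$ through \eqref{eq:georate} applied to $\nu$. Part (v) combines the expansion $\Lambda_\nu(u/\tau)=\tau(K_\mu'(u)-\log m)+O(\tau^{2})$ with a first-order expansion of $G$ at $1$.

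The only differences, both in (iv), are cosmetic. You bound $(\log G)'$ between $\lambda$ and $\lambda L_+/(L_+-x/\lambda)$ instead of expanding $\log(G(z)/A)$. For the Gumbel step you evaluate the distribution function at the deterministic point $s_x\to\infty$, where the paper uses its identity with $\varepsilon(S)\to0$ in probability; your version is if anything slightly cleaner.
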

	
	The canonical Omega law of (iii) needs no normalizing constant. It
	is not a member of the family, whose map at $\alpha=\beta=1$ is
	$ze^{z}$ rather than $(z-1)e^{z-1}$, but it comes from the
	composition principle of Theorem~\ref{thm:origin} with the argument
	of the reference curve translated by one, and the moments of $Y$
	are polynomials in $\Omega^{\pm1}$. The constraint $\alpha\le1$ is
	not needed for the definition but carries the shape. It is the
	condition under which the density of the random Lehmer mean is
	increasing for every $\beta\ge0$ and every level and under which
	the family is ordered in $\alpha$, as shown in the next section,
	and by Theorem~\ref{thm:origin} it is the condition $t\ge1$ that
	the reference law contain at least one whole exponential--lognormal
	factor. The shape and ordering results of the next section are
	stated under it. The logarithmic limit (ii) lies at
	the other end of the parameter range and has the richest structure,
	to which we return in the next section.
	
	\begin{figure}[!t]
		\centering
		\includegraphics[width=.85\textwidth]{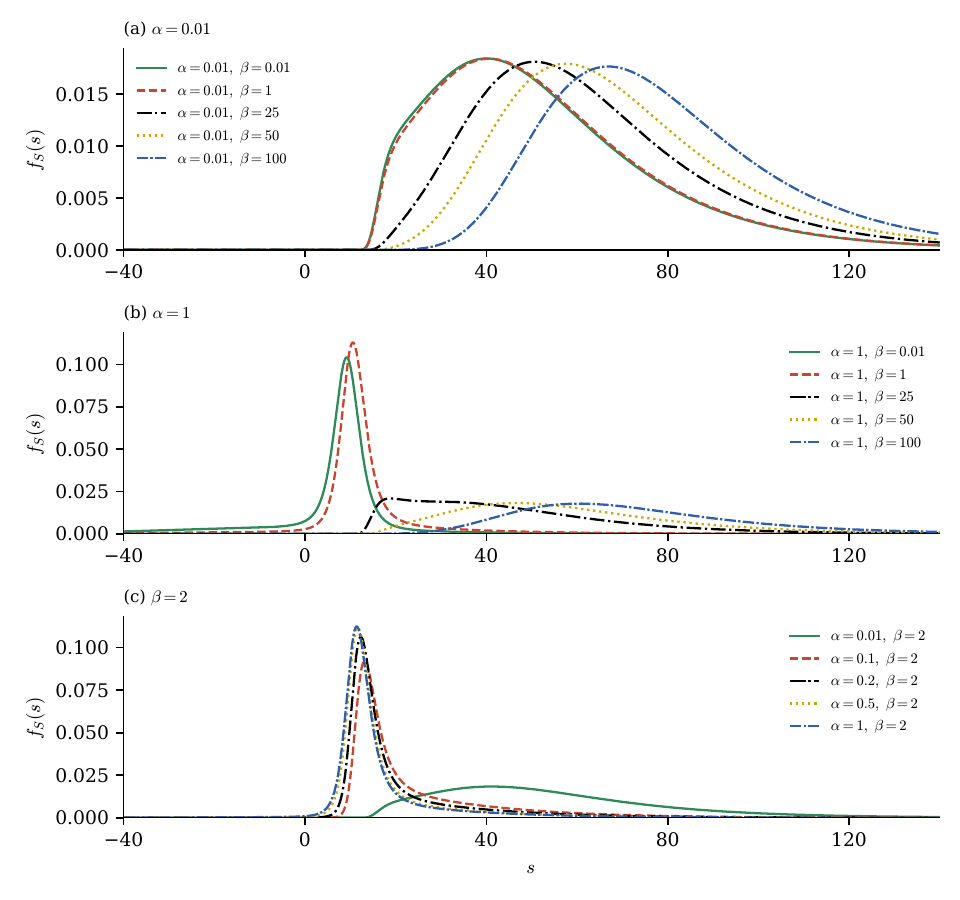}
		\caption{Densities of one signal at $L_+=e$. (a) $\alpha=0.01$ and
			$\beta$ varying. (b) $\alpha=1$ and $\beta$ varying. (c) $\beta=2$
			and $\alpha$ varying.}
		\label{fig:1}
	\end{figure}
	
	Figure~\ref{fig:1} shows the densities of one signal at the level
	$L_+=e$ for several parameter pairs. Increasing $\beta$ or
	decreasing $\alpha$ moves mass toward higher suddency and spreads
	the law, while the sharp peak in (b) and (c), which comes from a concavity
	episode of the curve, stays in place, because the shape of the
	law is carried by the escort covariance of the signal and the
	parameters act only through the density of the random Lehmer mean.
	The two parameters act largely through the combination
	$\lambda=\beta+1/(\alpha L_+)$ of the concentration limit in
	Proposition~\ref{prop:limits}(iv). When $\alpha$ is small the term
	$1/(\alpha L_+)$ dominates and moderate values of $\beta$ change
	little, as in (a); when $\alpha$ is of order one, $\beta$ governs the
	law, as in (b); and at fixed $\beta$ the law is insensitive to
	$\alpha$ until $\alpha$ becomes very small, as in (c).
	
	\section{Properties}\label{sec:properties}
	
	Throughout this section $S\sim\LL(\alpha,\beta)$ and
	$Z=\Leh_\nu(S)$. For the tail, extreme-value and generating-function results we
	assume that $\mu$ is finitely supported, that is, a finite sum of
	point masses, which is the case of a sample with multiplicities; the
	standardization preserves the masses. We then write $w_M,w_M'$ for
	the masses of the largest support point and of the largest support
	point below it, $w_m,w_m'$ for the masses of the smallest support
	point and of the smallest support point above it, and $r_\pm$ for the
	extreme logarithmic gaps of definition \eqref{eq:gaps}. For a sample
	with distinct extremes all four masses equal one. The
	log-derivative of the density of the random Lehmer mean is
	\begin{equation}\label{eq:psi}
		\psi(z)\eqdef\frac{\dd}{\dd z}\log f_Z(z)
		=\frac{\alpha\beta}{1+\alpha\beta z}+\frac{1-\alpha}{\alpha z}+\beta .
	\end{equation}
	
	\begin{theorem}[Shape]\label{thm:shape}\leavevmode
		\begin{enumerate}[label=\textup{(\roman*)},leftmargin=2.2em]
			\item For $\alpha\le1$ the density $f_Z$ is nondecreasing and
			log-concave and $G$ is convex, whatever $\beta\ge0$ and $L_+$,
			strictly so unless $\alpha=1$ and $\beta=0$. For $\alpha>1$ this
			fails when $\beta$ is small.
			\item For $\alpha\le1$ every interior mode of $S$ satisfies
			\begin{equation}\label{eq:modeeq}
				\frac{\Leh_\nu''(s)}{\Leh_\nu'(s)}=-\Leh_\nu'(s)\,\psi\bigl(\Leh_\nu(s)\bigr)\le0 ,
			\end{equation}
			with equality only when $\alpha=1$ and $\beta=0$, so the modes
			lie in the closed concavity episodes of the curve.
			\item The family increases in the likelihood-ratio order as $\beta$
			increases, and as $\alpha$ decreases within the range
			$\alpha\le1$. Every member dominates the Jeffreys law $S_J$ of
			$\nu$,
			\begin{equation}\label{eq:lrjeffreys}
				S\ge_{\lr}S_J .
			\end{equation}
			For a sample with distinct extremes, $\E[S]\ge\tfrac12$.
			\item For two standardized measures $\nu,\nu'$ at the same level,
			\begin{equation}\label{eq:kolmogorov}
				\sup_s\bigl|F_S(s)-F_{S'}(s)\bigr|
				\le\Bigl(\sup_{[1,L_+]}f_Z\Bigr)\sup_s\bigl|\Leh_\nu(s)-\Leh_{\nu'}(s)\bigr| ,
			\end{equation}
			and $\sup f_Z=CG'(L_+)$ when $\alpha\le1$.
		\end{enumerate}
	\end{theorem}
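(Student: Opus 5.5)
Parts (i)--(iv) are handled separately, with the log-derivative $\psi$ of \eqref{eq:psi} as the basic tool throughout.

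\emph{Part (i).} Write $f_Z(z)=C\,g(z)$ with $g(z)=(1/\alpha+\beta z)z^{1/\alpha-1}e^{\beta z}$. For $\alpha\le1$ each term of $\psi$ is nonnegative on $(1,L_+)$, so $f_Z$ is nondecreasing. Since $G'=f_Z/C$, the map $G$ is convex.

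For log-concavity, compute
\[
\psi'(z)=-\frac{(\alpha\beta)^2}{(1+\alpha\beta z)^2}-\frac{1-\alpha}{\alpha z^2}\le0 .
\]
Both terms vanish only when $\beta=0$ and $\alpha=1$, which gives the strictness claim.

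For $\alpha>1$ and $\beta$ small, $\psi(z)\approx(1-\alpha)/(\alpha z)<0$. Hence $f_Z$ decreases near $z=1$, and monotonicity and convexity of $G$ fail.

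\emph{Part (ii).} By Theorem~\ref{thm:basic}(ii), $f_S(s)=f_Z(\Leh_\nu(s))\Leh_\nu'(s)$. Differentiating $\log f_S$ gives
\[
\psi(\Leh_\nu(s))\,\Leh_\nu'(s)+\frac{\Leh_\nu''(s)}{\Leh_\nu'(s)} .
\]
Setting this to zero at an interior mode yields \eqref{eq:modeeq}. The sign follows because $\Leh_\nu'>0$ (Theorem~\ref{thm:master}) and $\psi\ge0$ by (i), with $\psi>0$ except in the case $\alpha=1,\beta=0$. So at a mode $\Leh_\nu''\le0$, and the mode lies in a closed concavity episode.

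\emph{Part (iii).} Because the map $s\mapsto\Leh_\nu(s)$ is common to all members, the likelihood ratio of two members in $s$ equals the ratio $f_Z^{(2)}/f_Z^{(1)}$ evaluated at $\Leh_\nu(s)$. The monotone likelihood-ratio order therefore transfers from $Z$ to $S$.

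For $Z$, it suffices that $\partial_\beta\psi\ge0$ and $\partial_\alpha\psi\le0$ (for $\alpha\le1$), since then $\log(f^{(2)}/f^{(1)})$ has a nonnegative derivative.
\begin{itemize}
\item $\partial_\beta\psi=\alpha/(1+\alpha\beta z)^2+1>0$.
\item $\partial_\alpha\psi=\beta/(1+\alpha\beta z)^2-1/(\alpha^2 z)$. Showing this is $\le0$ requires $\alpha^2 z\beta\le(1+\alpha\beta z)^2$. This holds because $(1+\alpha\beta z)^2\ge 4\alpha\beta z\ge \alpha^2\beta z$ when $\alpha\le 4$, and in particular for $\alpha\le1$.
\end{itemize}

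For the Jeffreys law, $Z_J=\Leh_\nu(S_J)$ has density $1/(z\ell)$, with log-derivative $-1/z$. We need $\psi(z)+1/z\ge0$, and indeed
\[
\psi(z)+\frac1z=\frac{\alpha\beta}{1+\alpha\beta z}+\frac{1}{\alpha z}+\beta>0
\]
for every $\alpha>0$.

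Then $\E[S]\ge\E[S_J]$, because the likelihood-ratio order implies the usual stochastic order and $\E[S]$ is finite by the exponential tails. It remains to show $\E[S_J]=\tfrac12$ for distinct extremes. This is the step I expect to be the main obstacle. I would write $\E[S_J]=\int s\,\Lambda_\nu'(s)\dd s/\ell$ and exploit \eqref{eq:profilecurve}: since $\Lambda_\nu'(s)=\ell^{-1}\int_{s-1}^{s}K''$-type expressions (Theorem~\ref{thm:master}), Fubini gives
\[
\int s\,\Lambda'\dd s=\int K_\nu''(u)\bigl(u+\tfrac12\bigr)\dd u ,
\]
normalized by $\int K''=\ell$.

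The question then reduces to whether $\int uK''(u)\dd u=0$ for the standardized measure. Integrating by parts, this equals the boundary terms $[uK'-K]$ evaluated between $\pm\infty$. With equal extreme masses these limits are $\log w_M$ and $\log w_m$ (using $K'\to\ell$ or $0$ and the asymptotics of $K$), so they cancel when $w_m=w_M=1$. If the tails do not cancel exactly, I would settle for the inequality that $\E[S_J]\ge\tfrac12$ when the boundary contribution $\log(w_m/w_M)/\ell$ vanishes.

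\emph{Part (iv).} Write
\[
|F_S-F_{S'}|=C\,\bigl|G(\Leh_\nu)-G(\Leh_{\nu'})\bigr|\le C\sup G'\,|\Leh_\nu-\Leh_{\nu'}| ,
\]
by the mean value theorem, and note that $CG'=f_Z$. When $\alpha\le1$, $f_Z$ is nondecreasing by (i), so its supremum on $[1,L_+]$ is attained at $L_+$, giving $\sup f_Z=CG'(L_+)$.
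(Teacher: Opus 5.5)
Your proposal is correct and follows the paper's proof. Parts (i), (ii) and (iv) are the same computations, and in (iii) differentiating $\psi$ in $\beta$ and $\alpha$ rather than writing out the density ratios is only a repackaging; it even gives the $\alpha$-ordering on all of $\alpha\le4$. Your Fubini and integration-by-parts derivation of $\E[S_J]=\tfrac12+\log(w_m/w_M)/\ell$ reproduces the proof of Theorem~\ref{thm:jeffreys}(iii), which the paper simply cites, so the closing hedge is unnecessary; note only that $uK_\nu'-K_\nu$ tends to $-\log w_M$ and $-\log w_m$ at $\pm\infty$ (not $+\log w_M$ and $+\log w_m$), which is what gives that sign.
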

	
	The concavity episodes of the curve are the orders at which the
	escort has just crossed from one scale of the signal to the next, so
	every mode of the law lies where the tilted signal changes scale.
	The converse fails, because the map $G$ can split one episode into
	several modes, and whether it does depends on the parameters through
	the combination $\lambda=\beta+1/(\alpha L_+)$ of
	Proposition~\ref{prop:limits}(iv). When $\lambda$ is small the
	density of the random Lehmer mean is nearly flat, the density of $S$
	follows the escort covariance, and there is one mode per episode.
	When $\lambda$ is large the mass concentrates at the upper end of
	the last episode, as in the concentration limit of
	Proposition~\ref{prop:limits}(iv), and the law is unimodal. At
	intermediate values of $\lambda$ the two terms of the
	log-derivative $\psi(\Leh_\nu)\Leh_\nu'+\Leh_\nu''/\Leh_\nu'$ of the
	density are comparable, and since the first is not monotone they
	can cross more than once inside one episode, so a signal with a
	single concavity episode may have a bimodal law. For the Jeffreys
	law the stationary points are located exactly, as shown later in
	this section. For $\alpha\le1$ the random Lehmer mean has an
	increasing hazard rate, unbounded at $L_+$. The likelihood-ratio
	order implies the usual stochastic order, so every quantile of $S$,
	and $\E[S]$ when finite, increases in $\beta$ and, for $\alpha\le1$,
	decreases in $\alpha$. The restriction on $\alpha$ is needed, since
	for two values of $\alpha$ above one and $\beta$ small the order can
	reverse. The bound $\E[S]\ge\tfrac12$ is inherited from the Jeffreys
	law, whose mean is one half for every sample without ties, as shown
	later in this section.
	
	\begin{theorem}[Tails and generating function]
		\label{thm:tails}
		Let $\mu$ be finitely supported. Put
		\begin{equation}\label{eq:tailconstants}
			c_+\eqdef f_Z(L_+)\,L_+\bigl(e^{r_+}-1\bigr)\frac{w_M'}{w_M},
			\qquad
			c_-\eqdef f_Z(1)\bigl(1-e^{-r_-}\bigr)\frac{w_m'}{w_m}.
		\end{equation}
		\begin{enumerate}[label=\textup{(\roman*)},leftmargin=2.2em]
			\item As $s\to+\infty$, for some $\delta>0$,
			\begin{equation}\label{eq:tails}
				1-F_S(s)=c_+e^{-r_+s}\bigl(1+O(e^{-\delta s})\bigr),
			\end{equation}
			\begin{equation}\label{eq:tails2}
				F_S(-s)=c_-e^{-r_-s}\bigl(1+O(e^{-\delta s})\bigr).
			\end{equation}
			The density satisfies $f_S(s)\sim r_+c_+e^{-r_+s}$ and the hazard
			rate tends to $r_+$. As $p\uparrow1$,
			\begin{equation}\label{eq:tailquantile}
				F_S^{-1}(p)=\frac{1}{r_+}\log\frac{c_+}{1-p}+o(1).
			\end{equation}
			\item The moment generating function $M_S(t)=\E[e^{tS}]$ is finite
			exactly for $-r_-<t<r_+$. The cumulant function $K_S=\log M_S$
			is strictly convex and steep at both ends, and
			\begin{equation}\label{eq:abelian}
				\lim_{t\uparrow r_+}(r_+-t)\,M_S(t)=r_+c_+,
			\end{equation}
			\begin{equation}\label{eq:abelian2}
				\lim_{t\downarrow-r_-}(t+r_-)\,M_S(t)=r_-c_- .
			\end{equation}
			
		\end{enumerate}
	\end{theorem}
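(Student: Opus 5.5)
We will work throughout with the survival representation \eqref{eq:survival}, $1-F_S(s)=C\bigl(A-G(\Leh_\nu(s))\bigr)$, and reduce everything to the geometric approach of $\Leh_\nu(s)$ to $L_+$ given by Theorem~\ref{thm:statgen}.

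\textbf{Step 1: exact exponential expansion of the curve.} For the finitely supported $\nu$, with support points $1=y_1<\dots<y_k=L_+$ and masses $w_j$, write $\Leh_\nu(s)=P_s/P_{s-1}$. Factor out $L_+^s$ to get the exact expansion
\[
L_+-\Leh_\nu(s)=\frac{\sum_{j<k}w_j y_j^{s-1}(L_+-y_j)}{\sum_j w_j y_j^{s-1}} .
\]
The leading term is $\frac{w_M'}{w_M}(L_+-y_{k-1})(y_{k-1}/L_+)^{s-1}$. Since $y_{k-1}/L_+=e^{-r_+}$ by \eqref{eq:gaps} and the standardization, this equals $\frac{w_M'}{w_M}L_+(e^{r_+}-1)e^{-r_+s}$. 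The remaining corrections are of relative size $O(e^{-\delta s})$, where $\delta$ is the minimum of $r_+$ (coming from the denominator) and the next logarithmic gap (coming from $y_{k-2}$). This sharpens \eqref{eq:georate} with an explicit error term.

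\textbf{Step 2: tail of $F_S$ and its consequences.} Because $G$ is smooth on $[1,L_+]$, a Taylor expansion gives
\[
A-G(z)=G'(L_+)(L_+-z)\bigl(1+O(L_+-z)\bigr).
\]
Multiplying by $C$ and using $CG'(L_+)=f_Z(L_+)$ from \eqref{eq:lgdensity}, we obtain \eqref{eq:tails} with the stated $c_+$. The lower tail \eqref{eq:tails2} is the mirror computation at $z=1$. There $\Leh_\nu(s)-1$ has leading term $\frac{w_m'}{w_m}(y_2-1)(1/y_2)^{-s}$ as $s\to-\infty$. Since $y_2=e^{r_-}$, we have $y_2-1=e^{r_-}(1-e^{-r_-})$, and together with $y_2^{s-1}$ this gives $(1-e^{-r_-})e^{-r_-|s|}$. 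Alternatively, one can invoke the reflection \eqref{eq:reflection} from Proposition~\ref{prop:invariance}.

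For the density, do not differentiate the asymptotic relation. Instead, use \eqref{eq:ll} directly: $f_S=f_Z(\Leh_\nu)\Leh_\nu'$. Differentiating the exact rational expansion from Step~1 shows that $\Leh_\nu'(s)$ is $r_+$ times the leading term, with the same relative error. This gives $f_S(s)\sim r_+c_+e^{-r_+s}$. The hazard limit then follows by dividing by the survival function. For the quantile statement \eqref{eq:tailquantile}, solve $1-p=c_+e^{-r_+s}(1+o(1))$ for $s$ and take logarithms; the $(1+o(1))$ factor becomes an additive $o(1)/r_+$.

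\textbf{Step 3: the generating function.} The exponential tails give finiteness of $M_S(t)$ exactly on $(-r_-,r_+)$: at $t=r_+$ the integrand $e^{ts}f_S(s)$ tends to the positive constant $r_+c_+$ and so is not integrable, while inside the interval the tails decay exponentially. Strict convexity of $K_S$ holds because $S$ is nondegenerate, being absolutely continuous.

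For the Abelian limit \eqref{eq:abelian}, split $M_S(t)=\int_{-\infty}^{s_0}+\int_{s_0}^\infty$. The first piece stays bounded as $t\uparrow r_+$. On the second, write $f_S(s)=r_+c_+e^{-r_+s}(1+\epsilon(s))$ with $|\epsilon(s)|\le Ke^{-\delta s}$. The main term integrates to $r_+c_+e^{-(r_+-t)s_0}/(r_+-t)$, and the error term contributes at most a bounded quantity. Multiplying by $(r_+-t)$ and letting $t\uparrow r_+$ gives $r_+c_+$; the lower end \eqref{eq:abelian2} is symmetric. Steepness follows because $M_S\to\infty$ at both ends, and for a convex cumulant function this forces $K_S'\to\pm\infty$ there. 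Alternatively, it can be read off directly from $K_S'=M_S'/M_S\sim 1/(r_+-t)$.

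The main obstacle is Step~1 together with its derivative version: obtaining a uniform relative error $O(e^{-\delta s})$ both for $L_+-\Leh_\nu$ and for $\Leh_\nu'$, including the case of ties handled by the masses $w$. Everything after that is routine Taylor expansion and Laplace-type estimates.
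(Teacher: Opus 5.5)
Your proposal is correct and follows essentially the paper's route. You use the exact exponential expansion of $L_+-\Leh_\nu(s)$ coming from the atoms, then a first-order expansion of $G$ at $L_+$, so that $c_+$ is $CG'(L_+)=f_Z(L_+)$ times the leading coefficient $L_+(e^{r_+}-1)w_M'/w_M$, and finally integrate the leading exponential to get the Abelian limits.

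The only difference is in how much of the expansion you carry. You keep just the leading term with an $O(e^{-\delta s})$ remainder and differentiate the rational form directly. The paper instead expands $1/N(s-1)$ geometrically into a full Dirichlet series over the semigroup generated by the gaps, and differentiates and integrates that series term by term. Your version needs less machinery and is equally rigorous.

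One slip to fix: at the lower end the leading term of $\Leh_\nu(s)-1$ is $\frac{w_m'}{w_m}(y_2-1)\,y_2^{\,s-1}$, not $\frac{w_m'}{w_m}(y_2-1)(1/y_2)^{-s}$. The former is the one you actually use in the next sentence to obtain $c_-$.
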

	
	The tails are decided by the two extreme gaps of the signal and by
	nothing else in its interior. Suddency mass far from the origin
	certifies isolated extremes, the rates $r_\pm$ are scale-free
	measures of that isolation, and near-ties at an extreme make the
	corresponding rate small and the tail long. The Abelian relations \eqref{eq:abelian} and \eqref{eq:abelian2}
	are the counterpart, for
	the moment generating function of the suddency moment, of the
	Abelian tail law that Ataei~\cite{ataei2026} proved for the transform
	at a power-law tail index.
	
	\begin{figure}[t]
		\centering
		\includegraphics[width=\textwidth]{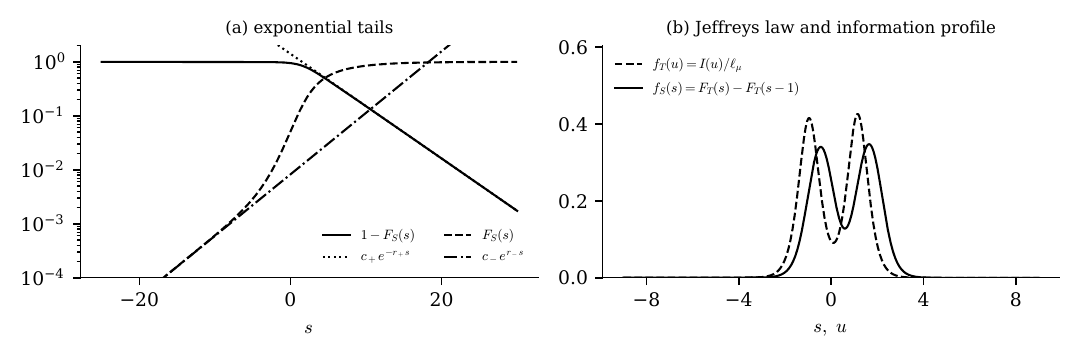}
		\caption{(a) Tails of the law of the sample $(1,1.3,1.9,2.4,3)$ at
			$(\alpha,\beta,L_+)=(0.6,1.3,3)$ and the asymptotes of the tail
			relations \eqref{eq:tails} and \eqref{eq:tails2}. (b) Jeffreys law and information profile of a
			sample on three scales.}
		\label{fig:2}
	\end{figure}
	
	Moments of the suddency moment are not elementary, since the inverse
	curve is not, but the moments of the random Lehmer mean are
	confluent hypergeometric and obey a recurrence. For $a>0$ and $\gamma\in\C$ write
	\begin{equation}\label{eq:Mdef}
		M_\gamma(a)\eqdef\int_1^{L_+}z^{a-1}e^{\gamma z}\dd z
		=\frac{L_+^{a}}{a}\,{}_1F_1(a;a+1;\gamma L_+)-\frac1a\,{}_1F_1(a;a+1;\gamma),
	\end{equation}
	with ${}_1F_1$ Kummer's function. Put
	\begin{equation}\label{eq:aEk}
		a\eqdef\frac1\alpha,
		\qquad
		H_k\eqdef C\bigl(L_+^{k}A-e^{\beta}\bigr),
	\end{equation}
	so that $H_0=1$.
	
	\begin{theorem}[Moments and entropy]
		\label{thm:moments}\leavevmode
		\begin{enumerate}[label=\textup{(\roman*)},leftmargin=2.2em]
			\item For every complex $w$ and $t$,
			\begin{equation}\label{eq:lgmoments}
				\E[Z^{w}]=C\bigl(L_+^{w}A-e^{\beta}\bigr)-wC\,M_\beta(w+a),
			\end{equation}
			\begin{equation}\label{eq:lgmgf}
				\E\bigl[e^{tZ}\bigr]=C\bigl(e^{tL_+}A-e^{t+\beta}\bigr)-tC\,M_{\beta+t}(a+1).
			\end{equation}
			Both are entire functions.
			\item For $\beta>0$ the moments $m_k=\E[Z^{k}]$ satisfy
			\begin{equation}\label{eq:m1}
				m_1=H_1-\frac1\beta+\frac{a}{\beta}\,C\,M_\beta(a),
			\end{equation}
			and, for $k\ge2$,
			\begin{equation}\label{eq:momentrec}
				m_k=H_k+\frac{k\,a}{\beta(k-1)}\,H_{k-1}
				-\frac{k\,(k-1+a)}{\beta(k-1)}\,m_{k-1}.
			\end{equation}
			Every moment and cumulant is an explicit function of
			$(\alpha,\beta,L_+)$ and of the single value $M_\beta(1/\alpha)$.
			
			\item The differential entropies are
			\begin{equation}\label{eq:entropyZ}
				h(Z)=-\log C-\E\log\Bigl(\frac1\alpha+\beta Z\Bigr)
				-\Bigl(\frac1\alpha-1\Bigr)\E[\log Z]-\beta\E[Z],
			\end{equation}
			\begin{equation}\label{eq:entropy}
				h(S)=h(Z)-\E\bigl[\log\Cov_{\nu_{S-1}}(X,\log X)\bigr].
			\end{equation}
		\end{enumerate}
	\end{theorem}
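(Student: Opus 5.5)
The plan is to work entirely with the law of $Z$ on $(1,L_+)$, using the density $f_Z(z)=C(a+\beta z)z^{a-1}e^{\beta z}=C\,G'(z)$ of Theorem~\ref{thm:basic}(i) with $a=1/\alpha$, and to integrate by parts against $G$.

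For part (i), I would write
\[
\E[Z^w]=C\int_1^{L_+}z^wG'(z)\dd z
=C\bigl(L_+^wA-e^{\beta}\bigr)-wC\int_1^{L_+}z^{w-1}z^{a}e^{\beta z}\dd z .
\]
The boundary term comes from $G(1)=e^\beta$ and $G(L_+)=A$. The remaining integral is $M_\beta(w+a)$, which gives \eqref{eq:lgmoments}. The same integration by parts with $e^{tz}$ in place of $z^w$ gives
\[
\E[e^{tZ}]=C\bigl(e^{tL_+}A-e^{t+\beta}\bigr)-tC\int_1^{L_+}z^{a}e^{(\beta+t)z}\dd z ,
\]
and the last integral is $M_{\beta+t}(a+1)$. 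Since the integrals run over the compact interval $[1,L_+]$ with $z>0$, and the integrand is entire in the parameters, differentiation under the integral sign shows both expressions are entire. I read $M_\gamma(\cdot)$ as the integral itself, which is defined for every complex argument on this interval, so there is no convergence issue at $a\le0$. The Kummer representation in \eqref{eq:Mdef} is only a closed form for $a>0$.

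For part (ii), the starting point is $m_k=H_k-kC\,M_\beta(k+a)$ from (i). I would then integrate $M_\beta(b)$ by parts, differentiating $z^{b-1}$ and integrating $e^{\beta z}$:
\[
M_\beta(b)=\frac{L_+^{b-1}e^{\beta L_+}-e^\beta}{\beta}-\frac{b-1}{\beta}M_\beta(b-1).
\]
With $b=k+a$, note that $L_+^{k+a-1}e^{\beta L_+}=L_+^{k-1}A$, so the boundary term equals $H_{k-1}/(C\beta)$. The case $k=1$ expresses $M_\beta(1+a)$ through $M_\beta(a)$ and gives \eqref{eq:m1} directly.

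For $k\ge2$ I would eliminate $M_\beta(k-1+a)$ using the relation at $k-1$, namely $C\,M_\beta(k-1+a)=(H_{k-1}-m_{k-1})/(k-1)$. Substituting gives
\[
m_k=H_k-\frac{k}{\beta}H_{k-1}+\frac{k(k-1+a)}{\beta(k-1)}\bigl(H_{k-1}-m_{k-1}\bigr).
\]
Collecting the $H_{k-1}$ coefficients, $-k+k(k-1+a)/(k-1)=ka/(k-1)$, which is \eqref{eq:momentrec}. Induction from \eqref{eq:m1} then shows that every moment depends only on $(\alpha,\beta,L_+)$ and $M_\beta(a)$. Cumulants are polynomials in moments, so the same holds for them.

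For part (iii), I would compute $h(Z)=-\E\log f_Z(Z)$ by expanding the logarithm of \eqref{eq:lgdensity}. Each term is finite because $Z$ is supported in $[1,L_+]$. For $h(S)$, write $S=\Leh_\nu^{-1}(Z)$. By \eqref{eq:ll}, $\log f_S(S)=\log f_Z(Z)+\log\Cov_{\nu_{S-1}}(X,\log X)$, and taking expectations gives \eqref{eq:entropy}.

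The one delicate point is showing that $\E\log\Cov_{\nu_{S-1}}(X,\log X)$ is finite. The covariance is bounded above by \eqref{eq:gruss}, so only $-\infty$ must be ruled out. For finitely supported $\mu$ the covariance decays like $e^{-r_\pm|s|}$ in the tails, by \eqref{eq:georate} and the Master identity, so its logarithm is $O(1+|S|)$. This is integrable because $M_S$ is finite near $0$ by Theorem~\ref{thm:tails}. For general compact $\mu$ I would instead use the change-of-variables form $h(S)=h(Z)+\E[\log(\Leh_\nu^{-1})'(Z)]$ and argue finiteness from $h(S)\le$ the entropy bound given by the variance of $S$. This step is the main obstacle.
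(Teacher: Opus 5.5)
Your proof is correct and is essentially the paper's: the same integration by parts against $G$ in (i); in (ii) the same recurrence $\beta M_\beta(k+a+1)=(L_+^{k}A-e^{\beta})-(k+a)M_\beta(k+a)$ and the same elimination of $M_\beta(k-1+a)$ (you obtain the recurrence by integrating $M_\beta$ by parts directly, while the paper equates two expressions for $\E[Z^k]$); and in (iii) the same change of variables $z=\Leh_\nu(s)$. The finiteness question you call the main obstacle is not needed for the statement: $\log f_Z$ is bounded on $[1,L_+]$ and $\log\Cov_{\nu_{s-1}}(X,\log X)$ is bounded above by \eqref{eq:gruss}, so both sides of \eqref{eq:entropy} are well defined in $(-\infty,+\infty]$ and equal; your variance-based fallback would not have worked anyway, because for the continuous populations of Theorem~\ref{thm:jeffreys}(iv) the tails are of order $1/s$, so $\Var S=\infty$ and a Gaussian entropy bound gives nothing.
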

	
	The entropy of $S$ is a signal-free constant minus the mean
	logarithmic escort covariance of the signal, which measures the
	average log-dispersion of the signal under tilting. Both $\E[S]$ and
	$h(S)$ are integrals against the density of the random
	Lehmer mean over the finite interval $(1,L_+)$, by the change of variables $z=\Leh_\nu(s)$,
	\begin{equation}\label{eq:featuresZ}
		\E[S]=\int_1^{L_+}\Leh_\nu^{-1}(z)f_Z(z)\dd z,
	\end{equation}
	\begin{equation}\label{eq:featuresZ2}
		\E\bigl[\log\Cov_{\nu_{S-1}}(X,\log X)\bigr]
		=\int_1^{L_+}\log\Leh_\nu'\bigl(\Leh_\nu^{-1}(z)\bigr)f_Z(z)\dd z .
	\end{equation}
	Both integrands have integrable logarithmic singularities at the
	endpoints, and both are computed by quadrature with the inverse of
	Proposition~\ref{prop:inverse}.
	
	For two atoms $m<M$ the inverse curve is the logit of formula
	\eqref{eq:twopointinverse} and the law closes completely. With
	$\varsigma(x)=(1+e^{-x})^{-1}$ the standardized curve is
	$\Leh_\nu(s)=1+(L_+-1)\varsigma(\ell(s-1))$, so that
	\begin{equation}\label{eq:logit}
		S=1+\frac1\ell\logit\Bigl(\frac{Z-1}{L_+-1}\Bigr),
	\end{equation}
	and by the density formula \eqref{eq:ll} $S$ is the logistic law of
	location $1$ and scale $1/\ell$ tilted by the likelihood ratio
	$(L_+-1)f_Z(1+(L_+-1)\varsigma)$, which is increasing for $\alpha\le1$;
	the law is exactly logistic at $\alpha=1$, $\beta=0$. This is the
	benchmark member, as the logistic law is the benchmark information
	profile.
	
	The logarithmic member of Proposition~\ref{prop:limits}(ii)
	deserves a theorem of its own. Everything about it is expressed
	through the information geometry of the escort family, and it
	explains the shape of every member of the family. Let
	\begin{equation}\label{eq:fisherinfo}
		I(u)=K_\mu''(u)=\Var_{\mu_u}(\log X)
	\end{equation}
	be the Fisher information of the escort family of the raw signal.
	
	\begin{theorem}[Jeffreys law]
		\label{thm:jeffreys}
		Let $S$ have the Jeffreys law of $\mu$, with distribution
		function \eqref{eq:jeffreysraw}.
		\begin{enumerate}[label=\textup{(\roman*)},leftmargin=2.2em]
			\item The density is a normalized Jeffreys divergence between
			consecutive escorts,
			\begin{equation}\label{eq:jeffreysdensity}
				f_S(s)=\frac{J(\mu_{s-1},\mu_s)}{\ell_\mu}
				=\frac1{\ell_\mu}\int_{s-1}^{s}I(u)\dd u,
				\qquad
				\int_\R I(u)\dd u=\ell_\mu .
			\end{equation}
			\item $S=T+U$ with $T$ and $U$ independent, $U$ uniform on $(0,1)$
			and $T$ the \emph{information profile} of the signal,
			\begin{equation}\label{eq:Tlaw}
				f_T(u)=\frac{I(u)}{\ell_\mu},
			\end{equation}
			\begin{equation}\label{eq:Tlaw2}
				F_T(u)=\frac{K_\mu'(u)-\log m}{\ell_\mu} .
			\end{equation}
			Hence
			\begin{equation}\label{eq:unitincrement}
				f_S(s)=F_T(s)-F_T(s-1)\le1,
			\end{equation}
			and the stationary points of the density of $S$, among them its
			modes, are the points where $I(s)=I(s-1)$. The
			cumulants satisfy $\kappa_j(S)=\kappa_j(T)+B_j/j$, with $B_j$ the
			Bernoulli numbers and $B_1=\tfrac12$.
			\item If $\mu$ has atoms at both extremes with probabilities
			$p_{\min},p_{\max}$, then
			\begin{equation}\label{eq:surprisal}
				\E[T^{+}]=-\frac{\log p_{\max}}{\ell_\mu},
				\qquad
				\E[T^{-}]=-\frac{\log p_{\min}}{\ell_\mu},
			\end{equation}
			\begin{equation}\label{eq:surprisal2}
				\E[S]=\frac12+\frac{\log(p_{\min}/p_{\max})}{\ell_\mu} .
			\end{equation}
			For a sample of $n$ observations with distinct extremes,
			\begin{equation}\label{eq:samplemean}
				\E[S]=\frac12,
				\qquad
				\E|T|=\frac{2\log n}{\ell_\mu},
			\end{equation}
			whatever the interior of the sample. For $\mu=\delta_m+\delta_M$, $T$
			is logistic with scale $1/\ell_\mu$.
			
			\item If $\mu$ is finitely supported, the right tail is exponential
			with rate $\log(x_{(n)}/x_{(n-1)})$. If
			instead $\mu$ has a density $f(x)\sim c(M-x)^{k}$ as $x\uparrow M$,
			then
			\begin{equation}\label{eq:cauchytail}
				1-F_S(s)\sim\frac{k+1}{\ell_\mu s},
			\end{equation}
			and $\E[S^{+}]=\infty$. The mirror statements hold at $m$. For
			any member of the family the same dichotomy holds with the
			constant $f_Z(L_+)L_+(k+1)$ in place of $(k+1)/\ell_\mu$.
		\end{enumerate}
	\end{theorem}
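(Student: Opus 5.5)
The plan is to work entirely from the master identity, Theorem~\ref{thm:master}, applied to the Jeffreys distribution function \eqref{eq:jeffreysraw}, $F_S(s)=(\Lambda_\mu(s)-\log m)/\ell_\mu$.

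\emph{Part (i).} Differentiating \eqref{eq:jeffreysraw} gives $f_S=\Lambda_\mu'/\ell_\mu$. By \eqref{eq:jeffreysslope} this equals both $J(\mu_{s-1},\mu_s)/\ell_\mu$ and $\ell_\mu^{-1}\int_{s-1}^{s}I$. For the total mass I use that $K_\mu'(u)=\E_{\mu_u}[\log X]$ tends to $\log M$ as $u\to\infty$ and to $\log m$ as $u\to-\infty$, because the escorts concentrate at the extremes. Then $\int_\R I=K_\mu'(\infty)-K_\mu'(-\infty)=\ell_\mu$.

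\emph{Part (ii).} Since $I\ge0$ has total mass $\ell_\mu$, $T$ is a genuine law with distribution function \eqref{eq:Tlaw2}. The identity $f_S(s)=\int_{s-1}^s f_T=F_T(s)-F_T(s-1)=\Prob(T+U\le s)'$ is exactly the convolution density of $T+U$, and it is bounded by $1$. Its derivative is $(I(s)-I(s-1))/\ell_\mu$, which locates the stationary points. The cumulants add, and the uniform law has $\kappa_1=1/2$ and $\kappa_j=B_j/j$ for $j\ge2$, from $\log\bigl((e^t-1)/t\bigr)$.

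\emph{Part (iii).} I compute $\E[T^+]=\int_0^\infty(1-F_T(u))\dd u=\ell_\mu^{-1}\int_0^\infty(\log M-K_\mu'(u))\dd u$. Integrating gives $\ell_\mu^{-1}\lim_{u\to\infty}\bigl[u\log M-K_\mu(u)+K_\mu(0)\bigr]$. Since $K_\mu(u)=u\log M+\log w_M+o(1)$, this is $(K_\mu(0)-\log w_M)/\ell_\mu=-\log p_{\max}/\ell_\mu$. The mirror computation gives $\E[T^-]$. Then $\E[S]=\E[T]+\tfrac12$, and for a sample $p_{\min}=p_{\max}=1/n$. In the two-atom case $K_\mu'$ is an affine image of a logistic sigmoid in $\ell_\mu u$, so $T$ is logistic.

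\emph{Part (iv).} For finite support, the geometric rate \eqref{eq:georate} in Theorem~\ref{thm:statgen} gives $\log M-\Lambda_\mu(s)\sim c\,e^{-s\log(M/M')}$, which is the exponential tail. For the density case, I substitute $x=M e^{-v}$ and use a Laplace/Watson argument. This gives $Z_\mu(s)\approx cM^{s+k+1}\Gamma(k+1)s^{-k-1}$, so $\Lambda_\mu(s)=\log M-(k+1)/s+o(1/s)$. Hence $1-F_S\sim(k+1)/(\ell_\mu s)$, which is not integrable, so $\E[S^+]=\infty$. For a general member, $1-F_S=C(A-G(\Leh_\nu))\approx CG'(L_+)(L_+-\Leh_\nu(s))$ and $L_+-\Leh_\nu\approx L_+(k+1)/s$. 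After converting $\nu$'s order through $\tau$, this yields the stated constant $f_Z(L_+)L_+(k+1)$.

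I expect the main obstacle to be this Tauberian-type step. It requires a uniform Watson-lemma expansion of $\Lambda$ to order $1/s$, so that the ratio $Z(s)/Z(s-1)$ keeps its $s^{-1}$ correction, and it requires tracking the time rescaling under standardization.
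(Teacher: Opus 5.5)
Your parts (i)--(iii) and the finitely supported case of (iv) follow the paper's proof essentially step for step. The gap is in the density case of (iv). It sits at the passage from $Z_\mu(s)\approx cM^{s+k+1}\Gamma(k+1)s^{-k-1}$ to $\Lambda_\mu(s)=\log M-(k+1)/s+o(1/s)$.

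Under the sole hypothesis $f(x)\sim c(M-x)^{k}$, Watson's lemma gives only $Z_\mu(s)=cM^{s+k+1}\Gamma(k+1)s^{-k-1}\bigl(1+\varepsilon(s)\bigr)$ with $\varepsilon(s)\to0$. Then
\[
\Lambda_\mu(s)=\log M+(k+1)\log\Bigl(1-\frac1s\Bigr)+\log\frac{1+\varepsilon(s)}{1+\varepsilon(s-1)} .
\]
The term you need is of order $1/s$, while the last term is only known to be $o(1)$. Even a bound $\varepsilon(s)=O(1/s)$ would leave it $O(1/s)$, the same order as the main term. You name this as the obstacle, but nothing in the proposal removes it. The tail constant, the conclusion $\E[S^{+}]=\infty$ and the statement for general members all rest on it.

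The missing idea is the master identity you announced at the start. Write $M-\Leh_\mu(s)=\E_{\mu_{s-1}}[M-X]$, as the paper does. Or, in your variable $v=\log(M/x)$, write $\log M-K_\mu'(u)=\E_{\mu_u}[\log(M/X)]$. Either is a ratio of two Laplace integrals whose integrands behave like $t^{k+1}$ and $t^{k}$ at the endpoint. So leading-order Watson asymptotics in numerator and denominator suffice, with the region $\{x\le M-\varepsilon\}$ contributing exponentially less. This gives $\E_{\mu_u}[M-X]\sim(k+1)M/u$, respectively $\log M-K_\mu'(u)\sim(k+1)/u$. Integrating the latter over $[s-1,s]$ gives $\log M-\Lambda_\mu(s)\sim(k+1)/s$ directly, with no second-order information about $Z_\mu$.

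For a general member no conversion through $\tau$ is needed. The power map is a diffeomorphism near $M$, so $\nu$ has contact order $k$ at $L_+$. The same argument then gives $L_+-\Leh_\nu(s)\sim L_+(k+1)/s$ in the standardized order, and $G'(L_+)>0$ gives the constant $f_Z(L_+)L_+(k+1)$.
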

	
	Assertion (ii) explains the shape of the law in one sentence, since
	the suddency moment is the Fisher information profile of the escort
	family, read as a density, blurred by one unit of suddency. The unit
	blur is the passage from the derivative $K_\mu'$ to the unit
	increment $\Lambda_\mu$ that organizes the theory of the transform
	(Figure~\ref{fig:2}(b)). A general member sees the same profile
	through the convex map $G\circ\exp$ on the logarithmic scale, and Jensen's inequality, applied in the Appendix, gives
	\begin{equation}\label{eq:jensen}
		S\ge_{\st}\widetilde T+U,
		\qquad
		F_{\widetilde T}=C\bigl(G(e^{K_\nu'})-e^{\beta}\bigr).
	\end{equation}
	Assertion (iv) says that continuous populations have suddency laws
	without a mean while samples have exponential tails, so the law of a
	sample and the law of the population it is drawn from differ in
	their tails at every sample size, and this decides the limit
	theorems.
	
	\begin{theorem}[Limit theorems]\label{thm:limits}
		Let $S_1,S_2,\dots$ be independent copies of $S$, $\bar S_n$ their
		mean and $S_{(n)}$ their maximum.
		\begin{enumerate}[label=\textup{(\roman*)},leftmargin=2.2em]
			\item Let $\mu$ be finitely supported. Then
			\begin{equation}\label{eq:clt}
				\sqrt n\,(\bar S_n-\E S)\xrightarrow{d}\mathcal N(0,\Var S),
			\end{equation}
			\begin{equation}\label{eq:gumbelmax}
				r_+S_{(n)}-\log(nc_+)\xrightarrow{d}\mathrm{Gumbel}.
			\end{equation}
			The mean $\bar S_n$ obeys Cram\'er's large-deviation principle
			with a rate function that is finite and strictly convex on $\R$.
			\item Let $\mu$ have a density with contact orders $k_\pm$ at the
			extremes, so that by Theorem~\ref{thm:jeffreys}(iv)
			$1-F_S(s)\sim c_+/s$ and $F_S(-s)\sim c_-/s$ for some constants
			$c_\pm>0$.
			Then $S$ is in the domain of attraction of the stable law of
			index one with skewness $(c_+-c_-)/(c_++c_-)$ and scale
			$\tfrac\pi2(c_++c_-)$. With the centering $b_n=\E[S;|S|\le n]$, which differs from
			$(c_+-c_-)\log n$ by a bounded quantity, $\bar S_n-b_n$ converges
			in law to it. For the Jeffreys law with
			$k_+=k_-=k$ the limit is Cauchy with scale $\pi(k+1)/\ell_\mu$.
			The maximum satisfies
			\begin{equation}\label{eq:frechet}
				\frac{S_{(n)}}{n}\xrightarrow{d}\mathrm{Fr\acute echet}(1)\ \text{with scale }c_+ .
			\end{equation}
		\end{enumerate}
	\end{theorem}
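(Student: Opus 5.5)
Part (i) rests on Theorem~\ref{thm:tails}. For finitely supported $\mu$ that theorem gives exponential tails on both sides. Hence $M_S(t)<\infty$ on the open interval $(-r_-,r_+)$, which contains $0$, so all moments of $S$ are finite and $\Var S<\infty$. The classical Lindeberg--L\'evy theorem then yields \eqref{eq:clt}.

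For the maximum, I would use $\Prob(S_{(n)}\le s)=F_S(s)^n$ and substitute $s_n=(x+\log(nc_+))/r_+$. Relation \eqref{eq:tails} gives
\[
1-F_S(s_n)=c_+e^{-r_+s_n}(1+o(1))=\frac{e^{-x}}{n}(1+o(1)),
\]
so $F_S(s_n)^n\to\exp(-e^{-x})$, which is \eqref{eq:gumbelmax}.

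For the large-deviation statement, Cram\'er's theorem applies because $0$ lies in the interior of the domain of $K_S$. The rate function is $I=K_S^*$, the Legendre transform of $K_S$. By Theorem~\ref{thm:tails}(ii), $K_S$ is strictly convex and steep on $(-r_-,r_+)$. Therefore $K_S'$ maps $(-r_-,r_+)$ bijectively onto the interior of the convex hull of the support, which is all of $\R$ since $S$ has full support. It follows that $I$ is finite everywhere, and it is strictly convex by Legendre duality of essentially smooth strictly convex functions (Rockafellar).

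Part (ii) is a direct application of the generalized central limit theorem for index one. Theorem~\ref{thm:jeffreys}(iv) states $1-F_S(s)\sim c_+/s$ and $F_S(-s)\sim c_-/s$, so the tails are regularly varying of index $-1$ and balanced, with $\Prob(S>s)/\Prob(|S|>s)\to c_+/(c_++c_-)$. The Gnedenko--Kolmogorov conditions then give the domain of attraction of the $1$-stable law with skewness $(c_+-c_-)/(c_++c_-)$. The normalizer is $a_n=n$ because $n\Prob(|S|>a_n)\to c_++c_-$, and the scale is $\tfrac\pi2(c_++c_-)$ by the standard L\'evy measure computation $\int_0^\infty(1-\cos x)x^{-2}\dd x=\pi/2$. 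In the mean-normalized form, the centering is $b_n=\E[S;|S|\le n]$.

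The main obstacle is showing that $b_n-(c_+-c_-)\log n$ stays bounded. I would write
\[
\E[S;|S|\le n]=\int_0^n\bigl(\Prob(S>u)-\Prob(S<-u)\bigr)\dd u-n\bigl(\Prob(S>n)-\Prob(S<-n)\bigr).
\]
The last term tends to $c_+-c_-$. The integrand equals $(c_+-c_-)/u+o(1/u)$, and here a bare $o(1/u)$ is not enough: I need an error term integrable at infinity, say $O(u^{-1-\delta})$ or $O(\log u/u^2)$. I would try to extract this from the proof of Theorem~\ref{thm:jeffreys}(iv). Near $M$, the escort mean satisfies $M-\Leh_\mu(s)\asymp(k+1)M/s$, with a correction of order $1/s^2$ coming from the next term in Watson's lemma for a density $c(M-x)^k(1+O(M-x))$. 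Feeding this through $G$ keeps the correction at relative size $O(1/s)$, which is integrable against $1/u$. If only the leading asymptotic were available, I would state the weaker $b_n=(c_+-c_-)\log n+o(\log n)$.

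For the Jeffreys law with $k_+=k_-=k$, the constants are $c_\pm=(k+1)/\ell_\mu$, so the skewness vanishes and the limit is Cauchy with scale $\tfrac\pi2\cdot2(k+1)/\ell_\mu=\pi(k+1)/\ell_\mu$. In this case $b_n$ is bounded.

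Finally, for the maximum, $\Prob(S_{(n)}\le nx)=F_S(nx)^n=(1-c_+/(nx)(1+o(1)))^n\to e^{-c_+/x}$ for $x>0$. This is the Fr\'echet law of index one with scale $c_+$.
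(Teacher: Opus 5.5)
Apart from one claim, your argument follows the paper's proof and is correct. That covers the central limit theorem from the exponential tails of Theorem~\ref{thm:tails}, and the Gumbel and Fr\'echet limits from $n(1-F_S)$ at the right scale. It also covers Cram\'er's theorem, where steepness and strict convexity of $K_S$ make the rate function finite and strictly convex on $\R$. Finally it covers the index-one domain of attraction from regular variation and tail balance, with norming $n$ and scale $\tfrac\pi2(c_++c_-)$.

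The gap is the claim that $b_n=\E[S;|S|\le n]$ differs from $(c_+-c_-)\log n$ by a bounded quantity. You rightly note that $1-F_S(u)\sim c_+/u$ alone gives only $o(\log n)$. Your remedy, a second Watson term, needs $f(x)=c(M-x)^k(1+O(M-x))$, but the hypothesis is only $f(x)\sim c(M-x)^k$. So under the stated assumptions you prove the weaker $o(\log n)$, and your ``$b_n$ is bounded'' for the Jeffreys law is unsupported too. The paper only asserts the bound, so there is nothing to borrow from it.

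No second-order information is needed: the curve is a unit increment, so the first-order error telescopes. Put $\phi(u)=\log\int(y/L_+)^u\dd\nu(y)$, so that $\Delta(u)=\log L_+-\Lambda_\nu(u)=\phi(u-1)-\phi(u)$. The leading-order Abelian theorem for Laplace transforms needs only $f\sim c(M-x)^{k_+}$. It gives $\phi(u)=\mathrm{const}-(k_++1)\log u+o(1)$, hence
\[
\int_1^n\Delta(u)\dd u=\int_0^1\phi(u)\dd u-\int_{n-1}^{n}\phi(u)\dd u=(k_++1)\log n+O(1).
\]
Since $\Delta(u)=O(1/u)$ and $G$ is analytic at $L_+$,
\[
1-F_S(u)=C\bigl(G(L_+)-G(L_+e^{-\Delta(u)})\bigr)=f_Z(L_+)L_+\Delta(u)+O(u^{-2}).
\]
Therefore $\int_1^n(1-F_S)=c_+\log n+O(1)$ with $c_+=f_Z(L_+)L_+(k_++1)$.

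The left tail is the mirror computation with $\phi_-(v)=\log\int y^{-v}\dd\nu(y)$, for which $\Lambda_\nu(-u)=\phi_-(u)-\phi_-(u+1)$. Your formula for $\E[S;|S|\le n]$ then gives the bounded difference under the stated hypotheses, and $b_n=O(1)$ when $c_+=c_-$.
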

	
	\section{Applications}\label{sec:estimation}
	
	The Lehmer--Lambert distribution of Definition~\ref{def:ll} is a
	representation of a signal, not a model of the mechanism that
	produced it. The signal is the measure $\mu$ and the suddency moment
	$S$ is never observed, so the parameters $\alpha$ and $\beta$ cannot
	be estimated from a signal by likelihood. They are chosen by the
	analyst, as the order of a norm or the exponent of a power mean is
	chosen, the Jeffreys law and the canonical Omega law of
	Proposition~\ref{prop:limits} being the parameter-free choices, and
	by Proposition~\ref{prop:limits}(iv) they act on the law mainly
	through the combination $\lambda$ of definition \eqref{eq:lambdaa},
	so one of them may be fixed.
	
	The intended use of the law is the analysis of nonstationary
	signals, and we illustrate it on electroencephalograms.
	Electroencephalograms alternate between quiet stretches and bursts,
	and the bursts differ from the background in how the energy is
	spread across scales and in how isolated the largest excursions
	are. These properties, rather than mean levels, carry the
	electrophysiological markers of depression, which have been sought
	in regional band powers, hemispheric asymmetries and nonlinear
	features~\cite{henriques1991,heller1993,mumtaz2017,delatorre2017},
	and the modes of the Lehmer--Lambert law lie at the scale
	crossovers while its tails read the isolation of the extremes.
	Financial returns show the same phenomenology, with heavy tails and
	volatility clustering among their stylized
	facts~\cite{cont2001,ataei2021,ataei2025a,ataei2020,ataei2025b}.
	The recordings come from the public dataset of
	Mumtaz~\cite{mumtaz2016}, which Mumtaz et al.~\cite{mumtaz2017}
	describe, and were made during the task condition of the dataset,
	with $19$ scalp channels of the $10$--$20$ system sampled at $256$
	Hz for about ten minutes. Each channel was band-pass filtered
	between $0.5$ and $45$ Hz, without other artefact rejection, and the
	positive signal of a channel is the root-mean-square amplitude of
	its half-second epochs, about $1200$ values per channel. This
	windowed amplitude is bounded away from zero and its extremes are
	averages rather than single samples. The choice matters, because by
	Proposition~\ref{prop:invariance} the law depends on the signal
	through its log-profile, which is anchored at the two extremes,
	whereas on the raw samples the minimum is set by the quantization
	step and the maximum by isolated artefacts. Each channel has its own
	standardized curve, its own law and its own features, and no
	estimation is involved. The parameters are fixed at
	$(\alpha,\beta,L_+)=(\tfrac12,2,e)$, the level being that of
	Figure~\ref{fig:1}. The suddency values quoted below are in
	standardized units, in which the logarithmic range of a channel is
	one; the corresponding orders of the raw signal are $\tau s$ with
	$\tau=1/\ell_\mu$.
	
	\begin{figure}[p]
		\centering
		\includegraphics[width=.72\textwidth]{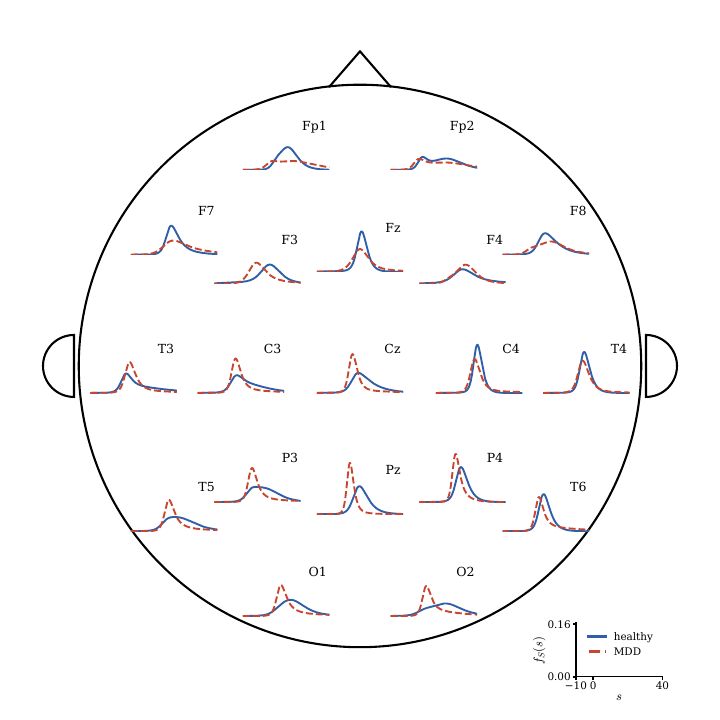}\\[2pt]
		{\small (a)}\\[8pt]
		\includegraphics[width=\textwidth]{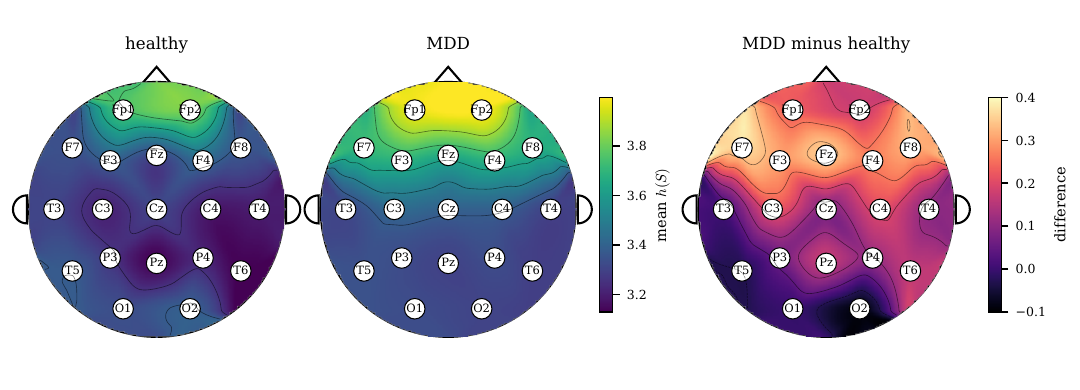}\\[2pt]
		{\small (b)}
		\caption{(a) Lehmer--Lambert densities $\LL(\tfrac12,2)$ at
			$L_+=e$ of the half-second amplitudes of the $19$ channels of a
			healthy subject (solid) and of a patient with major depressive
			disorder (dashed). (b) Entropy $h(S)$ of the same law averaged
			over the healthy subjects and over the patients, and the
			difference of the two averages.}
		\label{fig:3}
	\end{figure}
	
	Figure~\ref{fig:3}(a) shows the nineteen densities of one healthy
	subject and of one patient on the layout of the electrode cap. The
	laws are unimodal, with modes between $s=9$ and $s=22$, except at
	Fp2 of both subjects and at Fp1 of the patient, where they are
	bimodal with one mode near $s=7$ and one above $s=18$. Over the
	prefrontal electrodes and at Fz the law of the patient is broader,
	at F7, F4 and F8 the two laws nearly coincide, and at F3 and over
	the central, parietal, temporal and occipital electrodes the law of
	the patient is the sharper one, with its mode at lower suddency,
	between $s=9$ and $s=13$ against $s=13$ and $s=22$ for the healthy
	subject. By the entropy identity \eqref{eq:entropy}, a broader law
	is a signal whose tilted ensembles are more log-dispersed, that is,
	whose amplitudes occupy more scales and cross from one scale to the
	next more gradually, and the entropy of the patient averaged over
	the frontal electrodes exceeds that of the healthy subject by about
	half a nat while the two posterior averages coincide.
	
	Figure~\ref{fig:3}(b) carries the comparison to the two groups. The
	entropy of the law was computed exactly by the quadrature
	\eqref{eq:featuresZ2} for every channel of every subject, and the
	figure shows the group averages and their difference on the scalp.
	The entropy averaged over the seven frontal electrodes Fp1, Fp2, F7,
	F3, Fz, F4 and F8 is $3.48$ in the healthy group and $3.77$ in the
	patients, whereas over the seven posterior electrodes P3, Pz, P4,
	O1, O2, T5 and T6 it is $3.26$ against $3.32$. The excess in the
	patients is present at every frontal electrode, reaches $0.31$ to
	$0.36$ at F7, Fz and F8, decreases over the central electrodes and
	vanishes over the occipital and posterior temporal electrodes.
	
	The frontal pattern is consistent with the electrophysiological
	literature on depression, which locates its best established
	markers over the frontal region. Henriques and
	Davidson~\cite{henriques1991} described a frontal alpha asymmetry,
	and the analysis of the present dataset by Mumtaz et
	al.~\cite{mumtaz2017} rests on regional band powers and alpha
	asymmetry. The model of Heller~\cite{heller1993} adds a reduced
	activity of the right parietotemporal region, but the entropy of the law shows no
	posterior difference and no reliable left--right asymmetry. The
	irregularity of the electroencephalogram, measured by entropy rates,
	fractal dimensions and Lempel--Ziv complexity, is reported to be
	elevated in depression over large cortical
	areas~\cite{li2008,delatorre2017}, and the higher frontal entropy
	found here points in the same direction, although $h(S)$ measures
	not temporal irregularity but the number of amplitude scales that
	the tilted signal passes through. The modes and the entropy that
	Ataei and Wang~\cite{ataei2022} found discriminative on the
	resting-state recordings of the same dataset are thus given a
	meaning by the present theory.
	
	\section{Conclusion}\label{sec:conclusion}
	
	The Lehmer--Lambert distribution is the law obtained by composing
	the Lehmer transform of a positive signal with the Lehmer curve of
	an exponential--lognormal product. The composition is explicit at
	every step, and the law reads a signal through its log-profile, so
	it is invariant under changes of gain and of power-law calibration.
	Its modes lie at the scale crossovers of the tilted signal, its tails read the two extreme logarithmic gaps of a sample, its entropy is the mean
	logarithmic dispersion under tilting, and its logarithmic limit is a
	parameter-free law whose density is a Jeffreys divergence. The
	random Lehmer mean carries the parameters and has Kummer moments
	with a first-order recurrence, while the suddency moment carries the
	signal and has a moment generating function whose domain and boundary
	behaviour are decided by the extreme gaps alone. When the law represents a signal its parameters are chosen rather
	than estimated. On electroencephalograms of healthy subjects and of patients with
	major depressive disorder, the entropy of the law is higher in the
	patients over the frontal region and unchanged over the posterior
	region, a pattern consistent with the frontal location of the
	electrophysiological markers of depression.
	
	Several questions remain. The number of modes of the law can
	exceed the number of concavity episodes of the curve, and which
	signals and parameters give exactly one mode per episode is open. The infinite
	divisibility of the two-point Jeffreys law, the sum of a logistic
	and a uniform variable, is also open. The composition principle
	raises a moment problem, namely which increasing analytic functions
	on a bounded interval are Lehmer curves. On the applied side, the
	quantiles, the tail rates and the moments of the random Lehmer mean
	are further features of a signal, and the same programme applies to financial series, whose heavy
	tails and volatility clustering are the properties the law reads.
	
	\bmhead{Data Availability}
	No new data were generated in this work.
	
	\bmhead{Conflict of Interest}
	The authors declare no conflict of interest.
	
	\bmhead{Funding Declaration}
	This research has received no funding.

	\clearpage
	\begin{appendices}
		\section{Proofs}\label{app:proofs}
		
		Throughout the appendix,
		$a=1/\alpha$, so that
		\begin{equation}\label{eq:Gprime}
			G(z)=z^{a}e^{\beta z},
			\qquad
			G'(z)=\Bigl(\frac az+\beta\Bigr)G(z).
		\end{equation}
		
		\begin{proof}[Proof of Theorem~\ref{thm:origin} (Composition of two curves)]
			By Theorem~\ref{thm:calculus}(iii),(iv),
			$\Leh_{E\Lambda}=\Leh_E\Leh_\Lambda$ with $\Leh_E(s)=s$ and
			$\Leh_\Lambda(s)=e^{m_0+\sigma^{2}(s-1/2)}$, $m_0=\sigma^{2}/2$.
			This gives $se^{\sigma^{2}s}$. Solving $se^{\sigma^{2}s}=t$ as
			\begin{equation}\label{eq:lambertsolve}
				(\sigma^{2}s)\,e^{\sigma^{2}s}=\sigma^{2}t
			\end{equation}
			gives the inverse on the principal branch, since $\sigma^{2}s>0$.
			The variable $\log E$ is a reflected Gumbel variable, which is
			infinitely divisible by the classical results of Steutel and van
			Harn~\cite{steutel2004}, and $\log\Lambda$ is
			Gaussian. So $\log(E\Lambda)$ is infinitely divisible, and the
			convolution powers define $\rho_t$ with $Z_{\rho_t}=Z_\rho^{\,t}$,
			hence $\Leh_{\rho_t}=\Leh_\rho^{\,t}$. Then
			\begin{equation}\label{eq:Gpower}
				\Leh_\rho(z)^{t}=z^{t}e^{t\sigma^{2}z}=z^{1/\alpha}e^{\beta z}
			\end{equation}
			for $t=1/\alpha$ and $t\sigma^{2}=\beta$, so that the distribution
			function \eqref{eq:llF} takes the form \eqref{eq:composition}. Finally $F_S(S)$ is uniform, so
			$V\eqdef G(\Leh_\nu(S))=F_S(S)/C+e^{\beta}$ is uniform on
			$(G(1),G(L_+))$ and $S=\Leh_\nu^{-1}(G^{-1}(V))$.
		\end{proof}
		
		\begin{proof}[Proof of Theorem~\ref{thm:basic} (Density and quantiles)]\leavevmode
			
			\noindent(i) By formula \eqref{eq:Gprime}, $G'>0$ on $(1,L_+)$. So the distribution
			function $C(G-e^{\beta})$ of $Z$ has density $CG'$, which is
			the density \eqref{eq:lgdensity}.
			
			\noindent(ii) By the chain rule
			$f_S=f_Z(\Leh_\nu)\Leh_\nu'$, and the covariance identity \eqref{eq:covid} identifies
			$\Leh_\nu'(s)$ with the escort covariance. The power-sum form is
			formula \eqref{eq:powersumslope} applied to the standardized data.
			
			\noindent(iii) Set
			$F_S(s)=p$ with $z=\Leh_\nu(s)$. Then $z^{a}e^{\beta z}=p/C+e^{\beta}$.
			Raising to the power $\alpha$ and multiplying by $\alpha\beta$
			gives
			\begin{equation}\label{eq:quantilesolve}
				(\alpha\beta z)\,e^{\alpha\beta z}=\alpha\beta\bigl(p/C+e^{\beta}\bigr)^{\alpha},
			\end{equation}
			whence $\alpha\beta z=W_0(\cdot)$ on the principal branch, since
			$\alpha\beta z>0$; at $\beta=0$ the equation $z^{a}=p/C+1$ is solved
			directly.
			
			\noindent(iv) $G(Z)=F_S(S)/C+e^{\beta}$ is an affine image
			of the uniform variable $F_S(S)$. The converse is (iii) at $p=U$.
			
			\noindent(v) is immediate from $1-F_S=C(A-G(\Leh_\nu))$.
		\end{proof}
		
		\begin{proof}[Proof of Proposition~\ref{prop:invariance} (Invariance)]
			The power map is the composition of $x\mapsto x/m$ and
			$y\mapsto y^{\tau}$. By Theorem~\ref{thm:calculus}(i),(ii),
			$Z_\nu(s)=m^{-\tau s}Z_\mu(\tau s)$. Since
			\begin{equation}\label{eq:thetaexp}
				(x/m)^{\tau s}=\exp\bigl(\tau s\ell_\mu\theta(x)\bigr)=e^{\ell s\theta(x)},
			\end{equation}
			also $Z_\nu(s)=\mu(\Rpos)\int_0^1e^{\ell s\theta}\dd\pi(\theta)$.
			The logarithm of the ratio at consecutive orders is
			representation \eqref{eq:profilecurve}, the total mass cancelling, and
			$y_i=(x_i/m)^{\tau}=L_+^{\theta_i}$. The map $x\mapsto cx^{r}$ sends
			$\log x$ to $\log c+r\log x$ and leaves $\theta$ unchanged.
			Conversely two measures with the same log-profile are related by
			such a map, since the profile determines $\mu/\mu(\Rpos)$ up to the
			affine map of the logarithmic scale sending $[\log m,\log M]$ onto
			$[0,1]$. The reflection sends $\theta$ to $1-\theta$ and the
			standardized atoms to $L_+/y_i$. So $Z_{\check\nu}(s)=L_+^{s}Z_\nu(-s)$
			and
			\begin{equation}\label{eq:reflectionproof}
				\Leh_{\check\nu}(s)=\frac{L_+Z_\nu(-s)}{Z_\nu(1-s)}=\frac{L_+}{\Leh_\nu(1-s)} .
			\end{equation}
		\end{proof}
		
		\begin{proof}[Proof of Proposition~\ref{prop:limits} (Special cases and limits)]\leavevmode
			
			\noindent(i) is the distribution function \eqref{eq:llF} at $\beta=0$.
			
			\noindent(ii) As $\alpha\to\infty$,
			\begin{equation}\label{eq:loglimit}
				\frac{z^{1/\alpha}-1}{L_+^{1/\alpha}-1}\longrightarrow\frac{\log z}{\log L_+}.
			\end{equation}
			For the raw data the map is $(\log z-\log m)/\ell_\mu$. Since
			$F_S(S)$ is uniform, $\log\Leh_\mu(S)$ is uniform on
			$(\log m,\log M)$, and the mean of the log-uniform law is
			$(M-m)/\ell_\mu$.
			
			\noindent(iii) With $\Leh_\rho(y)=ye^{y}$ the map
			$z\mapsto(z-1)e^{z-1}$ increases from $0$ at $z=1$ to
			$\Omega e^{\Omega}=1$ at $z=1+\Omega$. So
			$F_S(s)=\Leh_\rho(\Leh_\nu(s)-1)$ is a distribution function.
			$Y=\Leh_\nu(S)-1$ has distribution function $ye^{y}$ and density
			$(1+y)e^{y}$ on $[0,\Omega]$, and $Ye^{Y}=F_S(S)=U$ inverts to
			$Y=W_0(U)$. Integrating $y^{k}$ against $\dd(ye^{y})$ by parts
			gives $\E[Y^{k}]=\Omega^{k}-k\int_0^{\Omega}y^{k}e^{y}\dd y$, a
			polynomial in $\Omega^{\pm1}$ since $e^{\Omega}=\Omega^{-1}$.
			
			\noindent(iv) Put
			$z=L_+-\tau'/\lambda$. Then
			\begin{equation}\label{eq:gumbelstep}
				\Prob\bigl(\lambda(L_+-Z)>\tau'\bigr)=C\bigl(G(z)-e^{\beta}\bigr)
				=\frac{G(z)/A-e^{\beta}/A}{1-e^{\beta}/A},
			\end{equation}
			and
			\begin{equation}\label{eq:gumbellog}
				\log\frac{G(z)}{A}=a\log\Bigl(1-\frac{\tau'}{\lambda L_+}\Bigr)-\frac{\beta\tau'}{\lambda}
				=-\tau'+O\Bigl(\frac{\tau'^{2}}{\lambda L_+}\Bigr),
			\end{equation}
			since $a\le\lambda L_+$. Also
			$e^{\beta}/A=\exp\{-\beta(L_+-1)-a\log L_+\}\to0$, being bounded by
			$\exp\{-\min(L_+-1,L_+\log L_+)\lambda\}$. Hence
			$\lambda(L_+-Z)\to\mathrm{Exp}(1)$. By the asymptotic relation \eqref{eq:georate}
			applied to $\nu$, $L_+-\Leh_\nu(s)=a_+e^{-r_+s}(1+\varepsilon(s))$ with
			$\varepsilon(s)\to0$, and $\Leh_\nu(S)=Z$ gives
			\begin{equation}\label{eq:gumbelidentity}
				r_+S-\log(a_+\lambda)=-\log\bigl(\lambda(L_+-Z)\bigr)+\log\bigl(1+\varepsilon(S)\bigr).
			\end{equation}
			Since $L_+-Z\to0$ in probability, $S\to\infty$ and
			$\varepsilon(S)\to0$ in probability. The limit is $-\log E$ with
			$E$ standard exponential, the standard Gumbel law.
			
			\noindent(v) By
			representation \eqref{eq:profilecurve} and Theorem~\ref{thm:calculus}(ii),
			\begin{equation}\label{eq:smallrange}
				\log\Leh_\nu(u/\tau)=K_\mu(u)-K_\mu(u-\tau)-\tau\log m
				=\tau\bigl(K_\mu'(u)-\log m\bigr)+O(\tau^{2})
			\end{equation}
			uniformly in $u$, because $K_\mu''\le\ell_\mu^{2}/4$. Since
			$\tau\ell_\mu=\log L_+=(L_+-1)(1+O(L_+-1))$,
			$\Leh_\nu(u/\tau)=1+(L_+-1)[F_T(u)+O(\tau)]$ with
			$F_T=(K_\mu'-\log m)/\ell_\mu$. Then
			$F_S(u/\tau)=C(G(1+(L_+-1)[F_T(u)+O(\tau)])-e^{\beta})\to F_T(u)$
			uniformly, because $C(G(1+(L_+-1)p)-e^{\beta})\to p$ uniformly in
			$p\in[0,1]$ as $L_+\downarrow1$, $G$ being $C^{1}$ with $G'(1)>0$.
		\end{proof}
		
		\begin{proof}[Proof of Theorem~\ref{thm:shape} (Shape)]\leavevmode
			
			\noindent(i) From the density \eqref{eq:lgdensity},
			\begin{equation}\label{eq:psiprime}
				\psi'(z)=-\frac{(\alpha\beta)^{2}}{(1+\alpha\beta z)^{2}}-\frac{1-\alpha}{\alpha z^{2}} .
			\end{equation}
			For $\alpha\le1$ the three terms of $\psi$ in formula \eqref{eq:psi}
			are nonnegative and the two terms of $\psi'$ are nonpositive,
			whatever $\beta\ge0$, so $f_Z$ is nondecreasing and log-concave;
			both are strict unless $\alpha=1$ and $\beta=0$, when $\psi\equiv0$.
			For $\alpha>1$ the middle term of $\psi$ equals $1/\alpha-1<0$ at
			$z=1$ and dominates when $\beta$ is small, while for large $\beta$
			the last term dominates. The hazard properties
			are the standard ones of log-concave densities on an interval, the
			divergence following from $A-G(z)\to0$. Convexity of $G$ follows
			from
			\begin{equation}\label{eq:Gsecond}
				G''=\bigl[a(a-1)z^{-2}+2a\beta z^{-1}+\beta^{2}\bigr]G,
			\end{equation}
			which is nonnegative for all $z\ge1$ and all $\beta\ge0$ if and only
			if $a\ge1$, since at $\beta=0$ it has the sign of $a-1$.
			
			\noindent(ii)
			Logarithmic differentiation of the density \eqref{eq:ll} gives
			\begin{equation}\label{eq:logfS}
				(\log f_S)'=\psi(\Leh_\nu)\Leh_\nu'+\frac{\Leh_\nu''}{\Leh_\nu'} .
			\end{equation}
			At a stationary point $\psi(\Leh_\nu)\Leh_\nu'\ge0$, since
			$\psi\ge0$ for $\alpha\le1$ and $\Leh_\nu'>0$, so $\Leh_\nu''\le0$.
			The inequality is strict unless $\psi\equiv0$, that is unless
			$\alpha=1$ and $\beta=0$, when the density of $S$ is proportional
			to $\Leh_\nu'$ and its modes are the inflection points at which
			the curve turns concave.
			
			\noindent(iii) For
			$\beta'>\beta$ the ratio $f_Z(z;\alpha,\beta')/f_Z(z;\alpha,\beta)$
			is proportional to
			\begin{equation}\label{eq:lrbeta}
				\frac{1+\alpha\beta'z}{1+\alpha\beta z}\,e^{(\beta'-\beta)z},
			\end{equation}
			a product of increasing functions. For $\alpha'<\alpha$ the log-derivative in $z$ of $f_Z(z;\alpha',\beta)/f_Z(z;\alpha,\beta)$
			equals
			\begin{equation}\label{eq:lralpha}
				\frac{\alpha-\alpha'}{\alpha\alpha'z}\Bigl[1-\frac{\alpha\alpha'\beta z}{(1+\alpha'\beta z)(1+\alpha\beta z)}\Bigr],
			\end{equation}
			whose bracket is positive when $\alpha'\le1$, because
			\begin{equation}\label{eq:lralphabracket}
				(1+\alpha'\beta z)(1+\alpha\beta z)-\alpha\alpha'\beta z
				=1+\alpha'\beta z+\alpha\beta z(1-\alpha')+\alpha\alpha'\beta^{2}z^{2}>0 .
			\end{equation}
			When $\alpha'>1$ the third term is negative and the left side of
			identity \eqref{eq:lralphabracket} is negative on an interval of
			values of $\beta z$ as soon as
			$(\alpha+\alpha'-\alpha\alpha')^{2}>4\alpha\alpha'$, so the
			restriction cannot be dropped. Both laws
			of $S$ are images of laws on $(1,L_+)$ under the common bijection
			$\Leh_\nu^{-1}$. So the likelihood-ratio order transfers to the
			suddency line, and it implies the usual stochastic order, hence
			the monotonicity of quantiles and means. The Jeffreys law of $\nu$
			makes $\log Z$ uniform on $(0,\ell)$ by
			Proposition~\ref{prop:limits}(ii). So the likelihood ratio of $S$
			with respect to $S_J$ is $\ell zf_Z(z)$ at $z=\Leh_\nu(s)$, which
			is $\ell C(a+\beta z)z^{a}e^{\beta z}$, increasing in $z$ and hence
			in $s$. Stochastic order gives $\E[S]\ge\E[S_J]=\tfrac12$ by
			Theorem~\ref{thm:jeffreys}(iii) applied to $\nu$.
			
			\noindent(iv) is the mean value theorem applied to the map
			$z\mapsto C(G(z)-e^{\beta})$, whose derivative is $f_Z$; for
			$\alpha\le1$ the density is nondecreasing by (i), so its supremum is
			$f_Z(L_+)=CG'(L_+)$.
		\end{proof}
		
		\begin{proof}[Proof of Theorem~\ref{thm:tails} (Tails and generating function)]\leavevmode
			
			\noindent(i) Write $y_1<\dots<y_k=L_+$ for the distinct standardized atoms,
			$w_1,\dots,w_k$ for their masses, and $\kappa_i=\log(L_+/y_i)$, so
			that $0=\kappa_k<\kappa_{k-1}=r_+<\dots$. Then
			\begin{equation}\label{eq:curveN}
				\Leh_\nu(s)=L_+\frac{N(s)}{N(s-1)},
				\qquad
				N(s)=w_k+\sum_{i<k}w_ie^{-\kappa_is}.
			\end{equation}
			For $s$ beyond some $s_1$ the reciprocal $1/N(s-1)$ is a convergent
			geometric series in $(N(s-1)-w_k)/w_k$. Each power is a finite exponential polynomial with exponents in the
			additive semigroup $\Lambda_+$ generated by $\kappa_1,\dots,\kappa_{k-1}$.
			The semigroup is discrete, since an element with $n$ summands is
			at least $nr_+$, so its elements can be listed in increasing order
			and
			\begin{equation}\label{eq:curvedirichlet}
				L_+-\Leh_\nu(s)=\sum_{j\ge1}a_je^{-\kappa^{(j)}s},
				\qquad s\ge s_1,
			\end{equation}
			absolutely convergent with exponents $\kappa^{(1)}<\kappa^{(2)}<\dots$
			in $\Lambda_+$, and
			\begin{equation}\label{eq:a1}
				a_1=L_+\frac{w_{k-1}}{w_k}\bigl(e^{r_+}-1\bigr).
			\end{equation}
			Since $G$ is analytic at $L_+$ and
			$\delta(s)\eqdef L_+-\Leh_\nu(s)\to0$,
			\begin{equation}\label{eq:tailtaylor}
				1-F_S(s)=C\bigl(G(L_+)-G(L_+-\delta(s))\bigr)
				=\sum_{j\ge1}(-1)^{j+1}\frac{CG^{(j)}(L_+)}{j!}\delta(s)^{j}
			\end{equation}
			converges for $s$ large. Powers of the series
			in expansion \eqref{eq:curvedirichlet} are again such series. Rearranging, for
			$s$ beyond some $s_2$,
			\begin{equation}\label{eq:taildirichlet}
				1-F_S(s)=\sum_{j\ge1}c_je^{-\kappa^{(j)}s},
				\qquad c_1=CG'(L_+)a_1=c_+ ,
			\end{equation}
			which is the first of the tail relations \eqref{eq:tails} with
			$\delta=\kappa^{(2)}-\kappa^{(1)}$. Term-by-term differentiation
			gives $f_S(s)\sim r_+c_+e^{-r_+s}$ and the hazard limit. The
			quantile statement inverts the tail. The left tail is the same
			computation at the other end, where
			$\Leh_\nu(s)-1\sim(w_m'/w_m)(1-e^{-r_-})e^{r_-s}$ as $s\to-\infty$,
			and the expansion proceeds in the exponentials $e^{s\log y_i}$.
			
			\noindent(ii) Finiteness of $M_S$ on $(-r_-,r_+)$ and divergence outside are
			immediate from the tail relations \eqref{eq:tails}. For $-r_-<t<r_+$ integration by
			parts gives
			\begin{equation}\label{eq:mgfparts}
				M_S(t)=1+t\int_0^{\infty}e^{ts}\bigl(1-F_S(s)\bigr)\dd s
				-t\int_{-\infty}^{0}e^{ts}F_S(s)\dd s .
			\end{equation}
			Inserting expansion \eqref{eq:taildirichlet} beyond $s_2$,
			\begin{equation}\label{eq:mgfseries}
				t\int_{s_2}^{\infty}e^{ts}\bigl(1-F_S(s)\bigr)\dd s
				=t\sum_{j\ge1}c_j\frac{e^{(t-\kappa^{(j)})s_2}}{\kappa^{(j)}-t},
			\end{equation}
			the interchange being justified by
			$\sum_j|c_j|\int_{s_2}^\infty e^{(t-\kappa^{(j)})s}\dd s
			\le(r_+-t)^{-1}e^{ts_2}\sum_j|c_j|e^{-\kappa^{(j)}s_2}<\infty$.
			The $j=1$ term is $r_+c_+/(r_+-t)+O(1)$ as $t\uparrow r_+$ and the
			rest stays bounded. This is the first of the Abelian relations
			\eqref{eq:abelian}. The second is symmetric. Steepness follows from
			$M_S'(t)=r_+c_+/(r_+-t)^{2}+O((r_+-t)^{-1})$.
			
		\end{proof}
		
		\begin{proof}[Proof of Theorem~\ref{thm:moments} (Moments and entropy)]\leavevmode
			
			\noindent(i) Since $f_Z=CG'$, integration by parts gives, for complex $w$,
			\begin{equation}\label{eq:momentparts}
				\E[Z^{w}]=C\bigl[z^{w}G\bigr]_1^{L_+}-wC\int_1^{L_+}z^{w-1}G(z)\dd z,
			\end{equation}
			and $z^{w-1}G(z)=z^{w+a-1}e^{\beta z}$ is the integrand of
			$M_\beta(w+a)$, entire in $w$. The same computation with $e^{tz}$
			gives formula \eqref{eq:lgmgf}, the integrand $e^{tz}G(z)=z^{a}e^{(\beta+t)z}$
			being that of $M_{\beta+t}(a+1)$.
			
			\noindent(ii) Directly,
			\begin{equation}\label{eq:momentdirect}
				\E[Z^{k}]=C\int_1^{L_+}z^{k}G'(z)\dd z
				=C\bigl[aM_\beta(k+a)+\beta M_\beta(k+a+1)\bigr].
			\end{equation}
			Equating with $H_k-kCM_\beta(k+a)$ from (i) gives the recurrence
			\begin{equation}\label{eq:Mrec}
				\beta M_\beta(k+a+1)=\bigl(L_+^{k}A-e^{\beta}\bigr)-(k+a)M_\beta(k+a),
			\end{equation}
			whose case $k=0$ is the normalization
			$1=C[aM_\beta(a)+\beta M_\beta(a+1)]$. Eliminating $M_\beta(1+a)$
			from $m_1=H_1-CM_\beta(1+a)$ gives formula \eqref{eq:m1}. For $k\ge2$, (i)
			gives $CM_\beta(k-1+a)=(H_{k-1}-m_{k-1})/(k-1)$ and recurrence \eqref{eq:Mrec}
			at $k-1$ gives
			$CM_\beta(k+a)=\beta^{-1}[H_{k-1}-(k-1+a)CM_\beta(k-1+a)]$.
			Substituting into $m_k=H_k-kCM_\beta(k+a)$ yields
			recurrence \eqref{eq:momentrec}.
			
			\noindent(iii) $h(Z)=-\E\log f_Z(Z)$ is
			read off the density \eqref{eq:lgdensity}. By the density
			formula \eqref{eq:ll},
			$h(S)=-\E\log f_Z(Z)-\E\log\Leh_\nu'(S)$ with
			$\Leh_\nu'=\Cov_{\nu_{s-1}}(X,\log X)$.
		\end{proof}
		
		\begin{proof}[Proof of Theorem~\ref{thm:jeffreys} (Jeffreys law)]\leavevmode
			
			\noindent(i) By relation \eqref{eq:jeffreysslope},
			$\Lambda_\mu'(s)=J(\mu_{s-1},\mu_s)=\int_{s-1}^{s}I$, and dividing
			by $\ell_\mu$ gives the density. $K_\mu'$ increases from $\log m$
			to $\log M$ by Theorem~\ref{thm:statgen} through the master
			identity, so $\int_\R I=\ell_\mu$.
			
			\noindent(ii) $F_T$ increases from $0$ to
			$1$ with density $I/\ell_\mu$, and
			\begin{equation}\label{eq:convolution}
				f_S(s)=\int_{s-1}^{s}f_T(u)\dd u=\bigl(f_T*\mathbf 1_{[0,1]}\bigr)(s)
			\end{equation}
			is the density of $T+U$. Differentiating $F_S=\int_{s-1}^{s}F_T$
			gives $f_S=F_T(s)-F_T(s-1)\le1$, and $f_S'=(I(s)-I(s-1))/\ell_\mu$.
			Cumulants add, and the cumulant function of $U$ is
			\begin{equation}\label{eq:uniformcumulants}
				\log\frac{e^{t}-1}{t}=\sum_{j\ge1}\frac{B_j}{j}\frac{t^{j}}{j!} .
			\end{equation}
			
			\noindent(iii) We have
			\begin{equation}\label{eq:Tplus}
				\E[T^{+}]=\int_0^\infty\bigl(1-F_T\bigr)
				=\frac1{\ell_\mu}\Bigl[K_\mu(0)-\lim_{u\to\infty}\bigl(K_\mu(u)-u\log M\bigr)\Bigr],
			\end{equation}
			and $Z_\mu(u)M^{-u}=\int(x/M)^{u}\dd\mu\to w_M$ by dominated
			convergence. So $\E[T^{+}]=(\log\mu(\Rpos)-\log w_M)/\ell_\mu=-\log p_{\max}/\ell_\mu$.
			$\E[T^{-}]$ is the same computation at $-\infty$, and
			$\E[S]=\E[T]+\tfrac12$ by (ii). For a sample with distinct extremes
			$p_{\min}=p_{\max}=1/n$.
			
			For two atoms $\mu_u$ is Bernoulli with success probability
			$\varsigma(\ell_\mu u)$ on $M$, so $F_T(u)=\varsigma(\ell_\mu u)$ is
			logistic with scale $1/\ell_\mu$.
			
			\noindent(iv) The atomic case is
			the asymptotic relation \eqref{eq:georate} with the positivity of the map's derivative at
			the endpoint. For the continuous case write
			$M-\Leh_\mu(s)=\E_{\mu_{s-1}}[M-X]$. Under $\mu_u$ the density of
			$M-X$ near $0$ is proportional to $(1-t/M)^{u}t^{k}\sim e^{-ut/M}t^{k}$,
			and Watson's lemma gives
			\begin{equation}\label{eq:watson}
				\E_{\mu_u}[M-X]\sim\frac{(k+1)M}{u},
			\end{equation}
			the contribution of $\{x\le M-\varepsilon\}$ being exponentially
			smaller. Hence $1-F_S(s)=\Phi(M)-\Phi(\Leh_\mu(s))\sim\Phi'(M)(k+1)M/s$
			for any map $\Phi$ with continuous positive derivative at $M$. This
			is $(k+1)/(\ell_\mu s)$ for $\Phi=\log(\cdot/m)/\ell_\mu$ and
			$f_Z(L_+)L_+(k+1)/s$ on the standardized range, the power map
			preserving the order of contact. The mirror statements are the
			same computation at $m$.
		\end{proof}
		
		\begin{proof}[Proof of the bound \eqref{eq:jensen}]
			By relation \eqref{eq:jeffreysslope}, $\Lambda_\nu(s)=\int_{s-1}^{s}K_\nu'(u)\dd u$
			is the average of $K_\nu'$ over an interval of length one. The map
			$x\mapsto G(e^{x})=\exp(ax+\beta e^{x})$ is convex, being the
			exponential of a convex function, so Jensen's inequality gives
			$G(\Leh_\nu(s))\le\int_{s-1}^{s}G(e^{K_\nu'(u)})\dd u$, hence
			$F_S(s)\le\int_{s-1}^{s}F_{\widetilde T}(u)\dd u=\Prob(\widetilde T+U\le s)$,
			which is the stochastic order stated.
		\end{proof}
		
		\begin{proof}[Proof of Theorem~\ref{thm:limits} (Limit theorems)]\leavevmode
			
			\noindent(i) The exponential tails of Theorem~\ref{thm:tails} give all
			moments and the central limit theorem, and
			$n(1-F_S)((x+\log(nc_+))/r_+)\to e^{-x}$ gives the Gumbel limit of
			the maximum. Cram\'er's theorem, in the form given by Dembo and
			Zeitouni~\cite{dembo1998} as Theorem 2.2.3, gives the large
			deviation principle with rate the Legendre transform $I_S$ of
			$K_S$. Since $K_S$ is finite, differentiable and strictly convex
			on $(-r_-,r_+)$ and steep by Theorem~\ref{thm:tails}(ii), Lemma
			2.2.5 and Lemma 2.2.20 of Dembo and Zeitouni~\cite{dembo1998} give
			that $I_S$ is finite and strictly convex on $\R$.
			
			\noindent(ii) A law
			with tails $c_\pm/s$ is in the domain of attraction of the stable
			law of index one with the stated skewness and scale, with norming
			$a_n=n$ and centering $b_n=\E[S;|S|\le n]$, by the classical
			criterion of regular variation and tail balance, see Feller~\cite{feller1971}. The norming is
			$a_n=n$ rather than $n(c_++c_-)$ because the slowly varying factor
			of the tails is a constant, and the constants $c_\pm$ are then
			carried by the scale of the limit. $\E[S;|S|\le n]$ differs from
			$(c_+-c_-)\log n$ by a bounded amount and converges when $c_+=c_-$.
			In that case the limit is Cauchy with scale $\tfrac\pi2(c_++c_-)$,
			which is $\pi(k+1)/\ell_\mu$ for the Jeffreys law with $k_\pm=k$.
			The maximum follows from $n(1-F_S(nx))\to c_+/x$ by the
			extreme-value criterion of Resnick~\cite{resnick2007}.
		\end{proof}
		
	\end{appendices}
	
\end{document}